\documentclass[a4paper,superscriptaddress,11pt, unpublished, shorttitle=template]{compositionalityarticle}
\input{idrisLang.sty}

\usepackage{graphicx}
\usepackage{amsthm}
\usepackage{mathtools} 
\usepackage{amssymb} 

\usepackage{subcaption} 

\usepackage{tikz}
\usetikzlibrary{
	cd,
	math,
	decorations.markings,
	positioning,
	arrows,
	arrows.meta,
	shapes,
	calc,
	fit,
	quotes,
	automata,
	petri,
	scopes,
	backgrounds,
	quotes
}

\usepackage[
	style=numeric, 
	maxnames=3,
	bibencoding=utf8,
	backend=bibtex,
	defernumbers=true,
	backref=true,
	hyperref=true,
	url=false,
	isbn=false,
	alldates=long
]{biblatex}

\makeatletter
\let\blx@rerun@biber\relax
\makeatother

\AtEveryBibitem{
	\clearlist{address}
	\clearlist{location}
	\clearfield{series}
}
\addbibresource{Bibliography.bib}

\def\backgrnd{black!10}	

\tikzstyle{place}=
[circle,thick,draw=blue!75,fill=blue!20,minimum size=6mm]
\tikzstyle{transition}=
[rectangle,thick,draw=black!75,fill=black!20,minimum size=4mm]

\tikzset{
	oriented WD/.style={
		every to/.style={out=0,in=180,draw},
		label/.style={
			font=\everymath\expandafter{\the\everymath\scriptstyle},
			inner sep=0pt,
			node distance=2pt and -2pt},
		semithick,
		node distance=1 and 1,
		decoration={markings, mark=at position \stringdecpos with \stringdec},
		ar/.style={postaction={decorate}},
		execute at begin picture={\tikzset{
				x=\bbx, y=\bby,
				every fit/.style={inner xsep=\bbx, inner ysep=\bby}}}
	},
	string decoration/.store in=\stringdec,
	string decoration={\arrow{stealth};},
	string decoration pos/.store in=\stringdecpos,
	string decoration pos=.7,
	bbx/.store in=\bbx,
	bbx = 1.5cm,
	bby/.store in=\bby,
	bby = 1.5ex,
	bb port sep/.store in=\bbportsep,
	bb port sep=1.5,
	bb port length/.store in=\bbportlen,
	bb port length=4pt,
	bb penetrate/.store in=\bbpenetrate,
	bb penetrate=0,
	bb min width/.store in=\bbminwidth,
	bb min width=1cm,
	bb rounded corners/.store in=\bbcorners,
	bb rounded corners=2pt,
	bb small/.style={bb port sep=1, bb port length=2.5pt, bbx=.4cm, bb min width=.4cm, 
		bby=.7ex},
	bb medium/.style={bb port sep=1, bb port length=2.5pt, bbx=.4cm, bb min width=.4cm, 
		bby=.9ex},
	bb/.code 2 args={
		\pgfmathsetlengthmacro{\bbheight}{\bbportsep * (max(#1,#2)+1) * \bby}
		\pgfkeysalso{draw,minimum height=\bbheight,minimum width=\bbminwidth,outer 
			sep=0pt,
			rounded corners=\bbcorners,thick,
			prefix after command={\pgfextra{\let\fixname\tikzlastnode}},
			append after command={\pgfextra{\draw
					\ifnum #1=0{} \else foreach \i in {1,...,#1} {
						($(\fixname.north west)!{\i/(#1+1)}!(\fixname.south west)$) +(-
						\bbportlen,0) 
						coordinate (\fixname_in\i) -- +(\bbpenetrate,0) coordinate (\fixname_in\i')}\fi 
					\ifnum #2=0{} \else foreach \i in {1,...,#2} {
						($(\fixname.north east)!{\i/(#2+1)}!(\fixname.south east)$) +(-
						\bbpenetrate,0) 
						coordinate (\fixname_out\i') -- +(\bbportlen,0) coordinate (\fixname_out\i)}\fi;
		}}}
	},
	bb name/.style={append after command={\pgfextra{\node[anchor=north] at 
				(\fixname.north) {#1};}}}
}

\tikzset{
	unoriented WD/.style={
		every to/.style={draw},
		shorten <=-\penetration, shorten >=-\penetration,
		label distance=-2pt,
		thick,
		node distance=\spacing,
		execute at begin picture={\tikzset{
				x=\spacing, y=\spacing}}
	},
	pack size/.store in=\psize,
	pack size = 8pt,
	spacing/.store in=\spacing,
	spacing = 8pt,
	link size/.store in=\lsize,
	link size = 2pt,
	penetration/.store in=\penetration,
	penetration = 2pt,
	pack color/.store in=\pcolor,
	pack color = blue,
	pack inside color/.store in=\picolor,
	pack inside color=blue!20,
	pack outside color/.store in=\pocolor,
	pack outside color=blue!50!black,
	surround sep/.store in=\ssep,
	surround sep=8pt,
	link/.style={
		circle, 
		draw=black, 
		fill=black,
		inner sep=0pt, 
		minimum size=\lsize
	},
	pack/.style={
		circle, 
		draw = \pocolor, 
		fill = \picolor,
		inner sep = .25*\psize,
		minimum size = \psize
	},
	outer pack/.style={
		ellipse, 
		draw,
		inner sep=\ssep,
		color=\pocolor,
	},
	intermediate pack/.style={
		ellipse,
		dashed, 
		draw,
		inner sep=\ssep,
		color=\pocolor,
	},
}

\tikzset{Yonepart/.pic={
		\node[bb={1}{2},bb name = {\tiny$X_{11}$}] (X11) {};
		\node[bb={2}{2},below right=of X11,bb name = {\tiny$X_{12}$}] (X12) {};
		\node[bb={2}{1}, above right=of X12,bb name = {\tiny$X_{13}$}] (X13) {};
		\node[bb={2}{2}, fit={($(X11.north west)+(.3,1.5)$) (X12)  ($(X13.east)+(-.3,0)$)},bb name = {\scriptsize $Y_1$}] (Y1) {};
		\draw (Y1_in1') to (X11_in1);	
		\draw (Y1_in2') to (X12_in2);
		\draw (X11_out1) to (X13_in1);
		\draw (X11_out2) to (X12_in1);
		\draw (X12_out1) to (X13_in2);
		\draw (X12_out2) to (Y1_out2');
		\draw (X13_out1) to (Y1_out1');
		\coordinate (bottombox) at ($(X12.south)$);
		\coordinate (rightbox) at ($(X13.east)$);
		\coordinate (Y1northwest) at ($(Y1.north west)$);
	}
}

\tikzset{Ytwopart/.pic={
		\node[bb={2}{2}, bb name = {\tiny$X_{21}$}] (X21) {};
		\node[bb={1}{2},above right=-1 and 1 of X21,bb name = {\tiny$X_{22}$}] (X22) {};
		\node[bb={1}{2}, fit={($(X21.south west)+(-.25,0)$) ($(X22.north east)+(.25,3.5)$)},bb name = {\scriptsize$Y_2$}] (Y2){};
		\draw (Y2_in1') to (X21_in2);
		\draw (X21_out1) to (X22_in1);
		\draw (X22_out2) to (Y2_out1');
		\draw let \p1=(X22.south east), \p2=($(Y2_out2)$), \n1={\y1-\bby}, \n2=\bbportlen in
		(X21_out2) to (\x1+\n2,\n1) -- (\x1+\n2,\n1) to (Y2_out2');
		\draw let \p1=(X22.north east), \p2=(X21.north west), \n1={\y1+\bby}, \n2=\bbportlen in
		(X22_out1) to[in=0] (\x1+\n2,\n1) -- (\x2-\n2,\n1) to[out=180] (X21_in1);
	}
}

\tikzset{SmallNeuronPic/.pic={
		\node[bb={3}{1}] (N1) {$\scriptstyle N_1$};
		\node[bb={2}{1}, above right=.7 and 3.5 of N1] (N2) {$\scriptstyle N_2$};
		\node[bb={2}{1}, below =of N2] (N3) {$\scriptstyle N_3$};
		\node[bb={3}{1}, below =of N3] (N4) {$\scriptstyle N_4$};
		\node[bb={6}{8}, fit={($(N1.west)-(.5,0)$) ($(N2.north)+(0,2)$) ($(N3.east)+(1.5,0)$) ($(N4.south)-(0,1)$)}, bb name={$\scriptstyle X$}] (X) {};
		\draw (X_in1') to (N2_in1);
		\draw (X_in2') to (N1_in1);
		\draw (X_in3') to (N1_in2);
		\draw (X_in4') to (N1_in3);
		\draw (X_in6') to (N4_in2);
		\draw (N1_out1) to (N2_in2);
		\draw (N1_out1) to (N3_in1);
		\draw (N1_out1) to (N4_in1);
		\draw (N2_out1) to (X_out1');
		\draw (N2_out1) to (X_out2');
		\draw (N2_out1) to (X_out3');
		\draw (N3_out1) to (X_out4');
		\draw (N3_out1) to (X_out5');
		\draw (N3_out1) to (X_out6');
		\draw (N4_out1) to (X_out7');
		\draw (N4_out1) to (X_out8'); 
		\draw (X_in5') to[looseness=2] (N3_in2);
		\draw let \p1=(N4.south east), \p2=(N4.south west), \n1={\y2-\bby}, \n2=\bbportlen in
		(N3_out1) to[in=0] (\x1+\n2,\n1) -- (\x2-\n2,\n1) to[out=180] (N4_in3);
	}
}

\tikzset{SmallNeuronDashed/.pic={
		\node[bb={3}{1}] (N1) {$\scriptstyle N_1$};
		\node[bb={2}{1}, above right=.7 and 3.5 of N1] (N2) {$\scriptstyle N_2$};
		\node[bb={2}{1}, below =of N2] (N3) {$\scriptstyle N_3$};
		\node[bb={3}{1}, below =of N3] (N4) {$\scriptstyle N_4$};
		\node[bb={6}{8}, fit={($(N1.west)-(.5,0)$) ($(N2.north)+(0,2)$) ($(N3.east)+(1.5,0)$) ($(N4.south)-(0,1)$)}, bb name={$\scriptstyle X$}] (X) {};
		\draw[dashed] (X_in1') to (N2_in1);
		\draw[dashed] (X_in2') to (N1_in1);
		\draw[dashed] (X_in3') to (N1_in2);
		\draw[dashed] (X_in4') to (N1_in3);
		\draw[dashed] (X_in6') to (N4_in2);
		\draw[dashed] (N1_out1) to (N2_in2);
		\draw[dashed] (N1_out1) to (N3_in1);
		\draw[dashed] (N1_out1) to (N4_in1);
		\draw[dashed] (N2_out1) to (X_out1');
		\draw[dashed] (N2_out1) to (X_out2');
		\draw[dashed] (N2_out1) to (X_out3');
		\draw[dashed] (N3_out1) to (X_out4');
		\draw[dashed] (N3_out1) to (X_out5');
		\draw[dashed] (N3_out1) to (X_out6');
		\draw[dashed] (N4_out1) to (X_out7');
		\draw[dashed] (N4_out1) to (X_out8'); 
		\draw[dashed] (X_in5') to[looseness=2] (N3_in2);
		\draw[dashed] let \p1=(N4.south east), \p2=(N4.south west), \n1={\y2-\bby}, \n2=\bbportlen in
		(N3_out1) to[in=0] (\x1+\n2,\n1) -- (\x2-\n2,\n1) to[out=180] (N4_in3);
	}
}

\tikzset{SmallNestingPic/.pic={
		\path (0,0) pic [purple] {Yonepart};
		\path ($(rightbox)+(5,-5)$) pic [orange] {Ytwopart};
		
		\node[bb={1}{2}, fit={($(Y1northwest)+(-.5,4)$) ($(Y2.south east)+(1,0)$)}, bb name={\small $Z$}] (Z) {};
		\draw (Z_in1') to (Y1_in2);
		\draw let \p1=(Y2.north west),\p2=(Y2.north east),\n1={\y2+\bby},\n2=\bbportlen in
		(Y1_out1) to (\x1+\n2,\n1)--(\x2+\n2,\n1) to (Z_out1');
		\draw (Y1_out2) to (Y2_in1);
		\draw (Y2_out2) to (Z_out2');
		\draw let \p1=(Y2.north east), \p2=(Y1.north west), \n1={\y2+\bby}, \n2=\bbportlen in
		(Y2_out1) to[in=0] (\x1+\n2,\n1) -- (\x2-\n2,\n1) to[out=180] (Y1_in1);
	}
}

\tikzset{Zredgreen/.pic={
		\node[bb={2}{2}, green!50!black, bb name = $\scriptstyle Y_1$] (YY1) {};
		\node[bb={1}{2}, red, below right=-1 and 2 of YY1, bb name=$\scriptstyle Y_2$] (YY2) {};
		\node[bb={1}{2}, fit={($(YY1.north west)+(-.5,4)$) ($(YY2.south east)+(.5,-2)$)}, bb name={\scriptsize $Z$}] (Z) {};
		\draw (Z_in1') to (YY1_in2);
		\draw (YY1_out1) to (Z_out1');
		\draw (YY1_out2) to (YY2_in1);
		\draw (YY2_out2) to (Z_out2');
		\draw let \p1=(YY2.north east), \p2=(YY1.north west), \n1={\y2+\bby}, \n2=\bbportlen in
		(YY2_out1) to[in=0] (\x1+\n2,\n1) -- (\x2-\n2,\n1) to[out=180] (YY1_in1);
	}
}

\tikzset{Zcombined/.pic={
		\node[bb={1}{2},green!25!black,bb name = {\tiny$X_{11}$}] (X11) {};
		\node[bb={2}{2},green!25!black,below right=of X11,bb name = {\tiny$X_{12}$}] (X12) {};
		\node[bb={2}{1}, green!25!black,above right=of X12,bb name = {\tiny$X_{13}$}] (X13) {};
		\draw (X11_out1) to (X13_in1);
		\draw (X11_out2) to (X12_in1);
		\draw (X12_out1) to (X13_in2);
		
		\node[bb={2}{2}, red!30!black, below right = 0 and 1.25 of X12, bb name = {\tiny$X_{21}$}] (X21) {};
		\node[bb={1}{2}, red!30!black, above right=-1 and 1 of X21,bb name = {\tiny$X_{22}$}] (X22) {};
		\draw (X21_out1) to (X22_in1);
		\draw let \p1=(X22.north east), \p2=(X21.north west), \n1={\y1+\bby}, \n2=\bbportlen in
		(X22_out1) to[in=0] (\x1+\n2,\n1) -- (\x2-\n2,\n1) to[out=180] (X21_in1);
		
		\node[bb={1}{2}, fit = {($(X11.north east)+(-1,3)$) (X12) (X13) ($(X21.south)+(0,-1)$) ($(X22.east)+(.5,0)$)}, bb name ={\scriptsize $Z$}] (Z) {};
		
		\draw (Z_in1') to (X12_in2);
		\draw (X13_out1) to (Z_out1');
		\draw (X12_out2) to (X21_in2);
		\draw let \p1=(X22.south east),\n1={\y1-\bby}, \n2=\bbportlen in
		(X21_out2) to (\x1+\n2,\n1) to (Z_out2');
		\draw let \p1=(X22.north east), \p2=(X11.north west), \n1={\y2+\bby}, \n2=\bbportlen in
		(X22_out2) to[in=0] (\x1+\n2,\n1) -- (\x2-\n2,\n1) to[out=180] (X11_in1);
	}
}

	\tikzset{
	pics/netA/.style args={#1/#2/#3/#4/#5/#6/#7}{code={	
			
			\node [place,label=above:$p_1$, tokens={%
				#1
			}] (-pl_1) {};
			
			\node [transition,label=above:$t$, label=below:#5] (-tr_1) [right = of -pl_1] {};
			
			\node [place,label=above:$p_2$, tokens={%
				#2
			}] (-pl_2) [right = of -tr_1] {};
			
			\node [transition,label=left:$v$, label=above:#6] (-tr_2) [below = of -tr_1] {};
			\node [transition,label=below:$u$, label=above:#7] (-tr_3) [below = of -tr_2] {};
			
			\node [place,label=below:$p_3$, tokens={%
				#3
			}] (-pl_3) [left = of -tr_3] {};
			
			\node [place,label=below:$p_4$, tokens={%
				#4
			}] (-pl_4) [right = of -tr_3] {};
			
			\draw[->] (-pl_1) -- (-tr_1);
			\draw[->] (-tr_1) -- (-pl_2);
			\draw[->] (-pl_2) -- (-tr_2);
			\draw[->] (-tr_2) -- (-pl_3);
			\draw[->] (-tr_2) -- (-pl_4);
			\draw[->] (-pl_3) -- (-tr_3);
			\draw[->] (-tr_3) -- (-pl_4);			
	}}
}

\usepackage{hyperref} 

\newtheorem{theorem}{Theorem}
	\AfterEndEnvironment{theorem}{\noindent\ignorespaces}
\newtheorem{proposition}{Proposition}
	\AfterEndEnvironment{proposition}{\noindent\ignorespaces}
\newtheorem{definition}{Definition}
	\AfterEndEnvironment{definition}{\noindent\ignorespaces}
\newtheorem{fact}{Fact}
	\AfterEndEnvironment{fact}{\noindent\ignorespaces}
\newtheorem{example}{Example}
	\AfterEndEnvironment{example}{\noindent\ignorespaces}
	\AfterEndEnvironment{proof}{\noindent\ignorespaces}

\newcommand{\Pl}[1]{P_{#1}} 
\newcommand{\Tr}[1]{T_{#1}} 
\newcommand{\Pin}[2]{{^\circ}(#1)_{#2}} 
\newcommand{\Pout}[2]{{(#1)_{#2}^\circ}} 
\newcommand{\Net}[1]{(\Pl{#1},\Tr{#1},\Pin{-}{#1},\Pout{-}{#1})} 

\newcommand{\Strings}[1]{#1^{\otimes}} 

\newcommand{\naturals}{\mathbb{N}}

\newcommand{\CategoryC}{\mathcal{C}}
\newcommand{\CategoryD}{\mathcal{D}}
\newcommand{\CategoryE}{\mathcal{E}}

\newcommand{\Petri}{\textbf{Petri}}
\newcommand{\tensor}{\otimes}
\newcommand{\tensorUnit}{\epsilon}
\newcommand{\id}[1]{id_{#1}} 

\newcommand{\suchthat}[2]{\left\{#1 \: \middle\vert \: #2\right\}} 

\newcommand{\PlM}[1]{{#1}_{Pl}} 
\newcommand{\TrM}[1]{{#1}_{Tr}} 

\newcommand{\Sym}[1]{\operatorname{Sym}_{#1}} 

\newcommand{\Mset}[1]{{#1}^{\oplus}} 
\newcommand{\Msets}[1]{#1^{\oplus}} 

\newcommand{\MultiplicitySym}{\mathfrak{M}} 
\newcommand{\Multiplicity}[1]{\MultiplicitySym_{#1}} 

\newcommand{\OrderingSym}{\mathfrak{O}} 
\newcommand{\Ordering}[1]{\OrderingSym_{#1}} 

\newcommand{\GPetri}{\Petri^{G}}
\newcommand{\FOSSMC}{\textbf{FOSSMC}}
\newcommand{\FSSMC}{\textbf{FSSMC}}
\newcommand{\GFOSSMC}{\FOSSMC^{G}}
\newcommand{\GFSSMC}{\FSSMC^{G}}

\newcommand{\FoldSym}{\mathcal{Q}}
\newcommand{\UnFoldSym}{\mathcal{N}}
\newcommand{\OFoldSym}{\mathcal{F}}

\newcommand{\OPetri}{\textbf{Petri}_{<}}
\newcommand{\OFSSMC}{\textbf{FSSMC}_{<}}

\newcommand{\idrisct}{{\texttt{idris-ct}}\xspace}
\begin{document}
\title{Computational Petri Nets: Adjunctions Considered Harmful}
%
%
%
%
\author{Fabrizio Genovese}	
	\orcid{0000-0001-7792-1375}
	\email{fabrizio@statebox.io}
\author{Alex Gryzlov}
	\orcid{0000-0001-6188-0417}
	\email{alex@statebox.io}
\author{Jelle Herold}
	\orcid{0000-0002-1966-2536}
	\email{jelle@statebox.io}
\author{Marco Perone}
	\orcid{0000-0002-1004-0431}
	\email{marcosh@statebox.io}
\author{Erik Post}
	\orcid{0000-0002-8111-9593}
	\email{erik@statebox.io}
\author{Andr\'e Videla}
	\orcid{0000-0002-7298-6230}
	\email{andre@statebox.io}
\affiliation{Statebox Team\\
\url{https://statebox.org}}
\maketitle
\begin{abstract}
	\noindent
We review some of the endeavors in trying to connect Petri nets with free symmetric monoidal categories. We give a list of requirements such connections should respect if they are meant to be useful for practical/implementation purposes. We show how previous approaches do not satisfy them, and give compelling evidence that this depends on trying to make the correspondence functorial in the direction from nets to free symmetric monoidal categories, in order to produce an adjunction. We show that dropping this immediately honors our desiderata, and conclude by introducing an Idris library which implements them.
\end{abstract}
\section{Introduction and motivation}
Among experts in concurrency and category theory, Petri nets have always been informally regarded as presentations of free strict symmetric monoidal categories (FSSMCs). The intuition behind such claims is simple: Given a Petri net, we can use its set of places to generate the objects of a FSSMC. Similarly, each of its transitions is considered as a generating morphism in the corresponding category, with domain and codomain the monoidal product of its input/output places, respectively. In this setting, a marking of the net is just an object in the corresponding FSSMC, and any sequence of transition firings can be mapped to a morphism. An example of this can be seen in Figure~\ref{fig: execution alongside net}, where we made use of wiring diagrams~\cite{Selinger2010} to represent FSSMCs morphisms.
\begin{figure}[h!]
	\centering
	\scalebox{0.5}{
		\begin{tikzpicture}
			\pgfmathsetmacro\bS{5}
			\pgfmathsetmacro\hkX{(\bS/3.5)}
			\pgfmathsetmacro\kY{-1.5}
			\pgfmathsetmacro\hkY{\kY*0.5}
			
			\draw pic (m0) at (0,0) {netA={{1}/{1}/{2}/{0}/{}/{}/{}}};
			\draw pic (m1) at (\bS,0) {netA={{0}/{2}/{2}/{0}/{$\blacktriangle$}/{}/{}}};
			\draw pic (m2) at ({2 * \bS},0) {netA={{0}/{1}/{3}/{1}/{}/{$\blacktriangledown$}/{}}};
			\draw pic (m3) at ({3 * \bS},0) {netA={{0}/{1}/{2}/{2}/{}/{}/{$\blacktriangledown$}}};
			
			\begin{scope}[very thin]
				\foreach \j in {1,...,3} {
					\pgfmathsetmacro \k { \j * \bS - 1 };
					\draw[gray,dashed] (\k,-4) -- (\k,-8.25);
					\draw[gray] (\k,1) -- (\k,-4);
				}
			\end{scope}
			
			\begin{scope}[shift={(0,-4)}, oriented WD, bbx = 1cm, bby =.4cm, bb min width=1cm, bb port sep=1.5]			
				\draw node [fill=\backgrnd,bb={1}{1}] (Tau) at (\bS -1,-1) {$t$};
				\draw node [fill=\backgrnd,bb={1}{2}, ] (Mu)  at ({2 * \bS - 1},-1) {$v$};
				\draw node [fill=\backgrnd,bb={1}{1}] (Nu)  at ({3 * \bS - 1},{2 * \kY}) {$u$};
				
				\draw (-1,-1) --     node[above] {$p_1$}       (0,-1)
				--                  node[above] {}          (Tau_in1);
				
				\draw (-1,-2) -- node[above] {$p_2$} (0,-2) -- (\bS-1, -2);
				\draw (-1,-3) -- node[above] {$p_3$} (0,-3) -- (\bS-1, -3);
				\draw (-1,-4) -- node[above] {$p_3$} (0,-4) -- (\bS-1, -4);

				\draw (Tau_out1) -- node[above] {$p_2$}    (Mu_in1);
				\draw (\bS-1,-2) -- (2*\bS-1, -2);
				\draw (\bS-1,-3) -- (2*\bS-1, -3);
				\draw (\bS-1,-4) -- (2*\bS-1, -4);
				
				\draw (Mu_out1) -- node[above] {$p_3$}    (3*\bS-1, -0.725);
				\draw (Mu_out2) -- node[above] {$p_4$}    (3*\bS-1, -1.325);
				
				\draw (2*\bS-1,-2) -- (3*\bS-1, -2);
				\draw (2*\bS-1,-3) -- (Nu_in1);
				\draw (2*\bS-1,-4) -- (3*\bS-1, -4);
				
				\draw (3*\bS-1,-0.725) to (4*\bS-2, -1.325) -- node[above] {$p_3$} (4*\bS-1, -1.325);
				\draw (3*\bS-1,-1.325) -- (3*\bS,-1.325) to (4*\bS-2, -4) -- node[above] {$p_4$} (4*\bS-1, -4);
				\draw (3*\bS-1,-2) to (4*\bS-2, -0.725) -- node[above] {$p_2$} (4*\bS-1, -0.725);
				\draw (Nu_out1) to (4*\bS-2, -3) -- node[above] {$p_4$} (4*\bS-1, -3);
				\draw (3*\bS-1,-4) to (4*\bS-2, -2) -- node[above] {$p_3$} (4*\bS-1, -2);
			\end{scope}
			
			\begin{pgfonlayer}{background}
				\filldraw [line width=4mm,join=round,\backgrnd]
				((-1,1.5)  rectangle (19,-8.5);
			\end{pgfonlayer}
			
		\end{tikzpicture}
	}
	\caption{An execution corresponding to a sequence of firings}\label{fig: execution alongside net}
\end{figure}
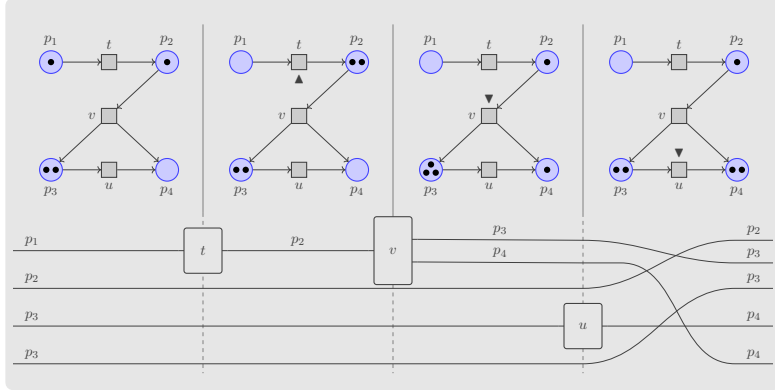

\noindent
This correspondence between Petri nets and free strict symmetric monoidal categories defines a \emph{process semantics} for nets: The FSSMC is interpreted as a deterministic version of its corresponding net, in which the history of every single token is tracked. This, in principle, could be implemented using a dependently typed language such as Idris~\cite{Brady2013} and, even more satisfactorily, such an implementation would allow one to map the FSSMC corresponding to a net to any other monoidal category representing a semantics, for instance the category $\textbf{Hask}$~\cite{HaskellWiki} of Haskell types and functions. The result is a procedure to formally compile a Petri net down to computations, with the net itself used as control to trigger the execution of algorithms and the passing of data between them. This is the approach adopted in defining the Statebox programming language~\cite{StateboxTeam2018}.

When focusing on implementation, this conceptual correspondence has to be made precise. In this paper we will review many approaches to the problem which have been pursued throughout the years, and highlight how all of them have to make compromises which are either computationally unfeasible or conceptually unsatisfying for our requirements. This is because the general strategy to link nets to FSSMCs has traditionally boiled down to the following:
\begin{itemize}
	\item We want a correspondence from nets to FSSMCs;
	\item We want said correspondence to be extendable to a functor;
	\item We want said functor to have an adjoint.
\end{itemize}
These desiderata are sensible from a purely categorical point of view, as best exemplified by the unifying work carried out in~\cite{Master2019}, but they are not enough to guarantee that a practical and usable implementation is feasible. So, instead, we propose the following "wishlist" to guide our implementation efforts:
\begin{definition}[List of requirements]\label{def: requirements}
	\leavevmode \\
	\begin{enumerate}
		\item We want to map each net to a free strict symmetric monoidal category, representing its possible executions. Free structures are particularly appealing for us since they are easier to handle computationally. Moreover we do not want to change the net/FSSMC definition too much to find a correspondence between the two, because we want to leverage on the diagrammatic formalism developed for both, see Requirement~\ref{item: mapping is useful for intuition}. \label{item: mapping net to FSSMC}
		\item We want the mapping from nets to FFSMCs to be computationally feasible. \label{item: mapping is computationally feasible}
		\item We want the FSSMCs corresponding to nets to represent computations in a \emph{meaningful way}. In particular, we want to be able to map net computations to other categories, translating the generating morphisms of a FSSMC into real operations (which can be pure functions, asynchronous calls etc.) via functors. Therefore, FSSMCs should not identify computations which are conceptually distinct. Moreover, any morphism in the FSSMC must correspond to a possible computation of the net. \label{item: mapping is faithful and full}
		\item We want our mapping to be useful for intuition: Users should be able to program in a goal-oriented way using nets (as explained in~\cite{Genovese}), test against their properties with already developed tools~(e.g.~\cite{Sobocinski2013a}), and then map the net to its computations to establish links with a semantics. \label{item: mapping is useful for intuition}
		\item We want users to be able to morph a net into another. Such morphisms should automatically be lifted to morphisms of net executions. We moreover want users to be able to ``tweak'' morphisms between executions directly in case the lifting provided is not satisfying for the task at hand. \label{item: morphisms are automatically lifted}
	\end{enumerate}
\end{definition}
In the following sections we will point out how the two lists provided above are somehow incompatible: If having a functorial correspondence from FSSMCs to nets doesn't pose any problem, going in the opposite direction begs for some choices to be made. These choices depend on the problem of linearizing multisets, and as we will see they cannot be globally extended to all nets without either quotienting computations which should be considered distinct, or by adding further structure to the definition of nets, which lessens their appeal in applications. So, while an adjoint correspondence between nets and computations seem desirable from a categorical point of view, all attempts to obtain one render it meaningless for applications.

We will also point out how improving upon the current situation is not possible if the functorial mapping from nets to FSSMCs is to be preserved. Luckily, dropping it is not really a problem: We will show how having a functor from FSSMCs to nets is enough for implementation purposes. In the last section, we introduce \idrisct~\cite{StateboxTeamb}, a work-in-progress implementation of the content we covered.
\section{Preliminary definitions}
When things are seen more in detail, the correspondence between Petri nets and FSSMCs is not as precise as it seems: It fails in a particularly frustrating way which, as pointed out in~\cite{Baldan2003}, boils down to the fact that the inputs and outputs of transitions are multisets and hence commutative monoids, while the monoid of objects in a monoidal category is not. To make these claims precise, we start by giving some definitions.
\subsection{Strings and multisets}
\begin{definition}
	Let $S$ be a set. We denote with $\Msets{S}$ and $\Strings{S}$ the set of \emph{finite multisets} and the set of \emph{strings of finite length} over $S$, respectively. The \emph{length of a string} $s$ is denoted with $|s|$, while the empty string with $\tensorUnit$.
\end{definition}
\begin{fact}
	Given a set $S$, $\Msets{S}$ is the \emph{free commutative monoid generated by $S$}, with the empty multiset as unit and multiset sum as multiplication. Similarly, $\Strings{S}$ is the \emph{free monoid generated by $S$}, with the empty string $\epsilon$ as unit and string concatenation as multiplication.
\end{fact}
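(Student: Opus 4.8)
The plan is to verify directly, in each of the two cases, the universal property that characterises free (commutative) monoids. Recall that a monoid $F$ together with a function $\eta\colon S \to F$ is the free monoid on $S$ exactly when for every monoid $M$ and every function $f\colon S \to M$ there is a unique monoid homomorphism $\bar f\colon F \to M$ with $\bar f \circ \eta = f$, and the free commutative monoid is characterised by the same statement with ``monoid'' replaced throughout by ``commutative monoid''. So in each case I will (i) name the insertion of generators $\eta$, (ii) construct the extension $\bar f$, (iii) check that it is a homomorphism restricting to $f$ along $\eta$, and (iv) check that it is the only such homomorphism.

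For $\Strings{S}$, let $\eta$ send $s$ to the length-one string. Given $f\colon S \to M$, the only possible definition is $\bar f(s_1\cdots s_n) = f(s_1)\cdots f(s_n)$, with the empty string sent to the unit of $M$; this is unambiguous because a string simply is a finite sequence over $S$. Preservation of the unit is immediate and preservation of the product is associativity in $M$ applied to a concatenation of two strings, while uniqueness follows by an evident induction on length, since any homomorphism extending $f$ is forced to send $s_1\cdots s_n$ to $f(s_1)\cdots f(s_n)$.

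For $\Msets{S}$ it is cleanest to present a finite multiset as a finite-support function $m\colon S \to \naturals$, with multiset sum as pointwise addition and singletons $\delta_s$ as indicator functions, so that $\eta(s) = \delta_s$. Given $f\colon S \to M$ with $M$ commutative, set $\bar f(m) = \sum_{s \in S} m(s)\cdot f(s)$, a finite sum in $M$ (only the support contributes). The homomorphism property then reduces to $(n+k)\cdot x = n\cdot x + k\cdot x$ together with reindexing a finite sum, and uniqueness again follows from the identity $m = \sum_{s} m(s)\cdot \delta_s$, which forces any homomorphism extending $f$ to agree with $\bar f$.

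The one place that demands care — and hence the natural main obstacle — is the well-definedness of $\bar f$ on multisets: one must be explicit that ``finite multiset'' means an equivalence class (equivalently, a multiplicity function), and then verify that the proposed $\bar f$ descends to that quotient, which is precisely where commutativity of $M$ is consumed. An alternative route that isolates this point is to establish the free-monoid statement for $\Strings{S}$ first and then observe that $\Msets{S}$ is the quotient of $\Strings{S}$ by the monoid congruence generated by $st \sim ts$; since a homomorphism out of $\Strings{S}$ factors through this congruence iff its codomain is commutative, the free commutative monoid claim follows with no extra computation.
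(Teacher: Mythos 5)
Your proposal is correct. The paper states this as a standard \emph{Fact} without supplying any proof, and your direct verification of the universal property in each case --- with the key observation that commutativity of the codomain is precisely what lets the extension descend from $\Strings{S}$ to $\Msets{S}$, whether phrased via multiplicity functions or via the quotient by the congruence generated by $st \sim ts$ --- is the canonical argument, so there is no divergence from the paper to report and no gap.
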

\begin{fact}
	Any function between sets $f:A \to B$ gives rise to a corresponding multiset homomorphism $\Mset{f}: \Msets{A} \to \Msets{B}$. Similarly, any function between sets $f:A \to B$ gives rise to a corresponding monoid homomorphism $\Strings{f}: \Strings{A} \to \Strings{B}$.
\end{fact}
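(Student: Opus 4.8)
The plan is to deduce both statements from the universal properties recorded in the preceding Fact, rather than by juggling representatives, since that formulation makes well-definedness automatic.

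For the multiset case, let $\eta_A : A \to \Msets{A}$ and $\eta_B : B \to \Msets{B}$ denote the canonical functions sending an element to the corresponding singleton multiset. Composing, $\eta_B \circ f : A \to \Msets{B}$ is a function into (the underlying set of) a commutative monoid. Since $\Msets{A}$ is the free commutative monoid generated by $A$, there is a unique commutative monoid homomorphism $\Mset{f} : \Msets{A} \to \Msets{B}$ satisfying $\Mset{f} \circ \eta_A = \eta_B \circ f$, and we take this to be the induced homomorphism. Unwinding the universal property, if we represent a finite multiset over $A$ as a finitely supported function $m : A \to \naturals$, then $\Mset{f}(m)$ is the function $b \mapsto \sum_{a \in f^{-1}(b)} m(a)$; one checks immediately that this is finitely supported, that $\Mset{f}$ sends the empty multiset to the empty multiset, and that it carries multiset sum to multiset sum, i.e.\ is a monoid homomorphism.

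The string case is the same argument with ``commutative monoid'' replaced by ``monoid'': writing $\iota_A : A \to \Strings{A}$ and $\iota_B : B \to \Strings{B}$ for the inclusions of generators, freeness of $\Strings{A}$ yields the unique monoid homomorphism $\Strings{f} : \Strings{A} \to \Strings{B}$ with $\Strings{f} \circ \iota_A = \iota_B \circ f$. Explicitly $\Strings{f}(\tensorUnit) = \tensorUnit$ and $\Strings{f}(a_1 a_2 \cdots a_n) = f(a_1) f(a_2) \cdots f(a_n)$, from which preservation of concatenation — and incidentally $|\Strings{f}(s)| = |s|$ — is evident. Functoriality, namely $\Mset{\id{A}} = \id{\Msets{A}}$, $\Mset{g \circ f} = \Mset{g} \circ \Mset{f}$, and likewise for strings, then follows from the uniqueness clauses of the two universal properties, though the statement as phrased does not require it.

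I do not expect a genuine obstacle here; the only point deserving care is well-definedness in the multiset case — that a prescription given on formal words descends through the commutativity congruence that defines multisets — and phrasing the construction through the universal property of $\Msets{A}$ sidesteps this entirely, which is why I would present the proof in that form rather than by manipulating representatives.
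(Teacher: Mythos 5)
Your argument is correct: the paper states this as a \emph{Fact} with no proof at all, implicitly relying on exactly the freeness/universal-property argument you spell out (note the preceding Fact establishes $\Msets{S}$ and $\Strings{S}$ as the free commutative monoid and free monoid on $S$). Your explicit descriptions of $\Mset{f}$ and $\Strings{f}$ and the remark on well-definedness are accurate, so this simply fills in the routine detail the authors chose to omit.
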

Note that the opposite is not true in general: There are multiset (monoid) homomorphisms that are not determined by a function between their corresponding base sets. This motivates the following definition:
\begin{definition}
	A multiset homomorphism $X: \Msets{A} \to \Msets{B}$ (respectively, monoid homomorphism $Y: \Strings{A} \to \Strings{B}$) is called \emph{grounded} if there is a function $f: A \to B$ such that $\Mset{f} = X$ (respectively $\Strings{f} = Y$).
\end{definition}
\begin{definition}
	Given a set $S$, we define \emph{the multiplicity of $S$} as the homomorphism $\Multiplicity{S}: \Strings{S} \to \Msets{S}$ sending a string $s$ to the multiset associating to each element of $S$ its number of occurrencies in $s$. When no ambiguity arises, we will use $\Multiplicity{}$ in place of $\Multiplicity{S}$.
\end{definition}
\begin{fact}
	Any monoid homomorphism $X: \Strings{A} \to \Strings{B}$ gives rise to a corresponding multiset homomorphism $\Mset{X}: \Msets{A} \to \Msets{B}$ by setting $\Mset{X}(a) = \Multiplicity{B}(X(a))$ for $a \in A$, and extending to non-generators using freeness.  If $X = \Strings{f}$ for some $f:A \to B$, then $\Mset{X} = \Mset{f}$.
\end{fact}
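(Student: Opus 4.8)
The plan is to lean entirely on the universal property of the free commutative monoid, which the earlier Fact tells us $\Msets{A}$ enjoys. First I would make explicit the insertion-of-generators map $\eta_A : A \to \Msets{A}$ sending $a$ to the singleton multiset, and recall that for any commutative monoid $M$ and any function $g : A \to M$ there is a \emph{unique} commutative monoid homomorphism $\bar g : \Msets{A} \to M$ with $\bar g \circ \eta_A = g$. Applying this with $M = \Msets{B}$ and $g : A \to \Msets{B}$ the composite $a \mapsto \Multiplicity{B}(X(a))$ produces a well-defined homomorphism, which we christen $\Mset{X}$. This disposes of the existence claim: "extending to non-generators using freeness" is exactly this universal extension, and nothing needs checking beyond the fact that $\Msets{B}$ is a commutative monoid and that $X$ and $\Multiplicity{B}$ are genuine functions, so their composite is a legitimate assignment of images to the generators.

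For the second assertion, suppose $X = \Strings{f}$ for some $f : A \to B$. I would compare $\Mset{X}$ and $\Mset{f}$ on the generators $\eta_A(a)$ and then invoke uniqueness. By the earlier Fact, $\Mset{f}$ is the universal lift of the map $a \mapsto \eta_B(f(a))$, i.e.\ $a$ to the singleton multiset $\{f(a)\}$. On the other hand $\Mset{X}(\eta_A(a)) = \Multiplicity{B}(X(a)) = \Multiplicity{B}(\Strings{f}(a))$; since $\Strings{f}$ acts letterwise it sends the one-letter string $a$ to the one-letter string $f(a)$, and $\Multiplicity{B}$ carries a one-letter string to the corresponding singleton multiset, so $\Multiplicity{B}(\Strings{f}(a)) = \{f(a)\}$ as well. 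Hence $\Mset{X}$ and $\Mset{f}$ agree on all generators of $\Msets{A}$, and as both are commutative monoid homomorphisms out of the free commutative monoid $\Msets{A}$, the uniqueness half of the universal property forces $\Mset{X} = \Mset{f}$.

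The whole argument is bookkeeping, so I do not anticipate a real obstacle; the only place demanding care is making the tacit identifications explicit — that an element $a \in A$ doubles as a one-letter string in $\Strings{A}$ and as a generator of $\Msets{A}$, that $\Strings{f}$ preserves one-letter strings, and that $\Multiplicity{B}$ restricted to one-letter strings is precisely $\eta_B$. Once these compatibilities between the string and multiset free constructions and the multiplicity map are written out, both halves of the Fact fall out immediately. If one wanted a slicker phrasing, one could observe that $\Multiplicity{}$ is a natural transformation from the free-monoid construction to the free-commutative-monoid construction and that $X \mapsto \Mset{X}$ is just postcomposition with its relevant component, but the elementary generator-chase above already suffices.
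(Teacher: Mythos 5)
Your proof is correct and coincides with the argument the paper intends: the Fact is stated without proof, and the phrase ``extending to non-generators using freeness'' is exactly the universal-property extension of $a \mapsto \Multiplicity{B}(X(a))$ that you spell out, with the second claim following from the uniqueness half of that universal property once you check agreement on generators. The care you take with the tacit identifications (a generator $a$ as a one-letter string, $\Strings{f}$ acting letterwise, $\Multiplicity{B}$ sending one-letter strings to singletons) is the only substantive content, and you handle it correctly.
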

\subsection{Petri nets}
Now we focus on Petri nets. Many -- often inequivalent -- definitions of Petri nets have been given throughout the years, so it makes sense to spell out which particular definition we are committing to. In the context of process semantics, the following one is the most popular:
\begin{definition}
	A \emph{Petri net} $N$ is a tuple $\Net{N}$, where $\Pl{N}$ and $\Tr{N}$ are sets, called the set of \emph{places} and \emph{transitions} of $N$, respectively, while $\Pin{-}{N}$ and $\Pout{-}{N}$ are functions $\Tr{N} \to \Msets{\Pl{N}}$, representing the input/output places, respectively, connected to each transition.
\end{definition}
\begin{definition}
	A \emph{morphism of Petri nets} $f:N \to M$ is specified by a couple $(\PlM{f}, \TrM{f})$, with $\PlM{f}: \Msets{\Pl{N}} \to \Msets{\Pl{M}}$ multiset homomorphism and $\TrM{f}: \Tr{N} \to \Tr{M}$ function such that
	\begin{equation*}
		\Pin{-}{N};\PlM{f} = \TrM{f};\Pin{-}{M} \qquad \Pout{-}{N};\PlM{f} = \TrM{f};\Pout{-}{M}
	\end{equation*}
\end{definition}
Indeed, we get the following fact, which is proven noting that multiset homomorphisms can be composed and that identity multiset homomorphisms can be lifted to identity net morphsims.
\begin{fact}
	Petri nets and their morphisms form a category, denoted $\Petri$.
\end{fact}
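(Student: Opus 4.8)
The plan is to verify the axioms of a category directly and componentwise, leaning on the two facts already recorded: multiset homomorphisms compose and identity multiset homomorphisms exist. Concretely, given $f : N \to M$ and $g : M \to L$, I would define the composite $f;g$ to be the pair $(\PlM{f};\PlM{g},\ \TrM{f};\TrM{g})$, and the identity on $N$ to be $\id{N} := (\id{\Msets{\Pl{N}}},\ \id{\Tr{N}})$. Since a composite of multiset homomorphisms is again a multiset homomorphism and a composite of functions is again a function, the two components of $f;g$ have the correct types; dually, $\id{\Msets{\Pl{N}}}$ is a multiset homomorphism and $\id{\Tr{N}}$ a function, so $\id{N}$ is a well-typed candidate morphism. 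What remains is to check that these candidates actually satisfy the two compatibility squares in the definition of a net morphism, and that composition is associative and unital.

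For the compatibility of $f;g$ with inputs, I would just chase the square using the hypotheses on $f$ and $g$ in turn:
\[
\Pin{-}{N};(\PlM{f};\PlM{g}) = (\Pin{-}{N};\PlM{f});\PlM{g} = (\TrM{f};\Pin{-}{M});\PlM{g} = \TrM{f};(\Pin{-}{M};\PlM{g}) = \TrM{f};(\TrM{g};\Pin{-}{L}) = (\TrM{f};\TrM{g});\Pin{-}{L},
\]
and symmetrically for $\Pout{-}{}$, replacing every occurrence of $\Pin{-}{-}$ by $\Pout{-}{-}$. For the identity, the compatibility squares collapse to $\Pin{-}{N};\id{\Msets{\Pl{N}}} = \id{\Tr{N}};\Pin{-}{N}$ (and the analogous statement for outputs), which is the tautology $\Pin{-}{N} = \Pin{-}{N}$.

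Associativity and the unit laws then follow immediately, because composition and identities are defined separately in each of the two coordinates, and in each coordinate we are composing in a structure that is already known to be a category — the category of commutative monoids (equivalently, free commutative monoids and their homomorphisms) for the place component, and $\mathbf{Set}$ for the transition component. Local smallness is likewise inherited: for fixed $N$ and $M$ the multiset homomorphisms $\Msets{\Pl{N}} \to \Msets{\Pl{M}}$ and the functions $\Tr{N} \to \Tr{M}$ form sets, hence so does their subset cut out by the two equational constraints. I do not expect any real obstacle here; the statement is routine, and the only point demanding a little care is the bookkeeping of the two compatibility squares under composition, together with respecting the diagrammatic (left-to-right) order of the semicolon — once that is in place, everything else is transported from $\mathbf{Set}$ and the category of commutative monoids.
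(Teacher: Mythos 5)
Your proof is correct and follows essentially the same route the paper indicates: the paper dispatches this fact in one sentence by noting that multiset homomorphisms compose and that identity multiset homomorphisms lift to identity net morphisms, and your argument is simply the fully written-out version of that sketch (componentwise composition and identities, the diagram chase for the two compatibility squares, and associativity/unitality inherited coordinatewise). No gaps.
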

The category $\Petri$ has been used, among others, in~\cite{Meseguer1990} and~\cite{Sassone1995}. Sometimes the definition of net morphism given above is too general, and begs for a suitable restriction. Most often, an interesting subcategory of $\Petri$ is the following one:
\begin{fact}
	There is a subcategory of $\Petri$, which we denote as $\GPetri$, where objects are nets and morphisms are \emph{grounded} homomorphisms of nets.
\end{fact}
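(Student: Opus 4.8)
The plan is to realise $\GPetri$ as the subcategory of $\Petri$ that has all Petri nets as objects and, as morphisms $f = (\PlM{f}, \TrM{f})$, exactly those whose place component $\PlM{f}$ is grounded (the transition component being left unconstrained, just as in $\Petri$). Since $\Petri$ is already known to be a category, I would not re-prove any of the net-morphism axioms or the squares defining a net morphism; the only thing left to establish is that the class of grounded morphisms contains all identities and is closed under composition, which is precisely what it takes for a collection of morphisms, sitting over all the objects, to form a subcategory.

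First I would dispose of identities. The identity on a net $N = \Net{N}$ is $(\id{\Msets{\Pl{N}}}, \id{\Tr{N}})$, and its place component is the identity multiset homomorphism on $\Msets{\Pl{N}}$, which is exactly $\Mset{(\id{\Pl{N}})}$, the image under $(-)^\oplus$ of the identity function on $\Pl{N}$. Hence $\id{N}$ is grounded, witnessed by $\id{\Pl{N}}$.

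Next I would handle composition. Given grounded $f : N \to M$ and $f' : M \to L$, choose witnessing functions $g : \Pl{N} \to \Pl{M}$ and $g' : \Pl{M} \to \Pl{L}$, so that $\PlM{f} = \Mset{g}$ and $\PlM{f'} = \Mset{g'}$. The place component of the composite $f;f'$ taken in $\Petri$ is $\PlM{f};\PlM{f'} = \Mset{g};\Mset{g'}$, so it is enough to know that $\Mset{g};\Mset{g'} = \Mset{(g;g')}$; granting this, $f;f'$ is grounded with witness $g;g'$ and closure follows. This reduces the whole statement to the functoriality of the assignment $(-)^\oplus$ on sets and functions, i.e. $\Mset{\id{A}} = \id{\Msets{A}}$ and $\Mset{g};\Mset{g'} = \Mset{(g;g')}$.

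The only step that requires an actual argument, and hence the ``main obstacle'' — admittedly a very mild one — is this functoriality. I would read it off the explicit description of $\Mset{g}$: on a generator $a \in A$ it returns the singleton multiset on $g(a)$, and it is extended to all of $\Msets{A}$ by the universal property of $\Msets{A}$ as the free commutative monoid on $A$. Then $\Mset{g};\Mset{g'}$ and $\Mset{(g;g')}$ are two monoid homomorphisms out of a free commutative monoid that agree on generators (both send $a$ to the singleton on $g'(g(a))$), hence coincide, and likewise $\Mset{\id{A}}$ and $\id{\Msets{A}}$ agree on generators. I expect no genuine difficulty; the one thing to stay alert to is that groundedness constrains the place component only, so no extra compatibility condition on $\TrM{f}$ needs to be checked beyond what membership in $\Petri$ already guarantees.
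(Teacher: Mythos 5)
Your proposal is correct and follows the only natural route: the paper states this as a \texttt{Fact} without proof, and the intended argument is exactly the one you give, namely that identities and composites of grounded morphisms are grounded because $(-)^{\oplus}$ is functorial ($\Mset{\id{A}} = \id{\Msets{A}}$ and $\Mset{g};\Mset{g'} = \Mset{(g;g')}$, both checked on generators of the free commutative monoid). Your closing observation that groundedness constrains only the place component, so nothing new needs checking for $\TrM{f}$, is also the right reading of the paper's definition.
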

The restriction from $\Petri$ to $\GPetri$ has been used in~\cite{Baez2018}, where nets were studied using double categories.
\subsection{Free strict symmetric monoidal categories (FSMCs)}
Now we spell out what a free symmetric monoidal category is. Here too there is some confusion, since freeness can be imposed only on objects, or on both objects and morphisms. Let us clarify:
\begin{definition}\label{def: free on objects smc}
	A \emph{free-on-objects, strict SMC (symmetric monoidal category)} is a strict symmetric monoidal category whose monoid of objects is freely generated.
\end{definition}
\begin{definition}\label{def: free smc}
	A \emph{free strict SMC (FSSMC)} is a symmetric monoidal category whose monoid of objects is $\Strings{S}$ for some set of generators $S$, and whose morphisms are generated by the following introduction rules:
	\begin{gather*}
		\frac{s \in \Strings{S}}{\id{s}: s \to s}
		\qquad 
		\frac{s,t \in \Strings{S}}{\sigma_{s,t}: {s \tensor t} \to {s \tensor t}} 
		\qquad
		\frac{(\alpha, s,t) \in T}{\alpha: s \to t} \\\\
		\frac{\alpha: A \to B, \,\, \alpha': A' \to B'}{\alpha \tensor \alpha':  {A \tensor A'} \to {B \tensor B'}} 
		\qquad 		
		\frac{\alpha: A \to B, \,\, \beta: B \to C}{\alpha;\beta:  A \to C}
	\end{gather*}
	Where elements of the set $T$ are triples $(\alpha, s, t)$ with $s, t \in \Strings{S}$. Morphisms are quotiented by the following equations, for $\alpha: A \to B$, $\alpha': A' \to B'$, $\alpha'': A'' \to B''$, $\beta: B \to C$, $\beta':B' \to C'$, $\gamma: C \to D$:
	\begin{align*}
		\alpha ; \id{B} = &\,\,\alpha = \id{A} ; \alpha & 
		\quad  (\alpha;\beta);\gamma &= \alpha;(\beta;\gamma)\\
		\tensorUnit\tensor \alpha = &\,\,\alpha = \alpha \tensor \tensorUnit& 
		\quad  (\alpha \tensor \alpha') \tensor \alpha'' &= \alpha \tensor (\alpha' \tensor \alpha'')\\
		\id{A} \tensor \id{A'} &= \id{A \tensor A'} & 
		\quad  (\alpha \tensor \alpha') ; (\beta \tensor \beta') &= (\alpha ; \beta) \tensor (\alpha' ; \beta')\\
		\sigma_{A, A' \tensor A''} = (\sigma_{A,A'} &\tensor \id{A''}); (\id{A} \tensor \sigma_{A',A''}) & 
		\quad  \sigma_{A,A'};\sigma_{A',A} &= \id{A \tensor A'}\\
 		&\quad\qquad \mathrlap{\sigma_{A,A'};(\alpha' \tensor \alpha)= (\alpha \tensor \alpha');\sigma_{B,B'}}&
	\end{align*}
\end{definition}
Intuitively, a FSSMC is a symmetric monoidal category where morphisms do not satisfy any further equation. Since the monoid of objects is freely generated, we will often use the string notation $s_1 \dots s_n$ to denote $s_1\tensor \dots \tensor s_n$. Among FSSMCs there is a distinguished class of categories -- which will be the cause of all the negative results in this paper -- that deserve their own notation:
\begin{definition}
	Given a set $S$ we denote with $\Sym{S}$ the category generated as in Definition~\ref{def: free smc}, where the monoid of objects is $\Strings{S}$ and the set of generating morphisms $T$ is empty.
\end{definition}
Finally, we have the following couple of results, easy to prove:
\begin{fact}
	Free-on-objects strict SMCs and strict symmetric functors between them form a category, denoted $\FOSSMC$. Restricting the objects to FSSMCs, we obtain a subcategory $\FSSMC \subset \FOSSMC$. Restricting strict symmetric monoidal functors to be grounded homomorphisms on objects, we obtain further restrictions $\GFOSSMC \subset \FOSSMC$ and $\GFSSMC \subset \FSSMC$.
\end{fact}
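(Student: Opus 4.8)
The plan is to obtain every one of these categories as a subcategory of the category $\mathbf{SSMC}$ of strict symmetric monoidal categories and strict symmetric monoidal functors, so that associativity of composition and the unit laws come for free from $\mathbf{SSMC}$ (itself inherited from $\mathbf{Cat}$); the only real work is to check that the relevant classes of objects and morphisms are closed under identities and composition.

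First I would record the two facts that make $\mathbf{SSMC}$ a category: the identity endofunctor on a strict symmetric monoidal category is strict symmetric monoidal, and the composite of two strict symmetric monoidal functors $F,G$ is again strict symmetric monoidal, since the defining on-the-nose equalities $F(A\tensor B)=FA\tensor FB$, $F(\tensorUnit)=\tensorUnit$, $F(\sigma_{A,B})=\sigma_{FA,FB}$ (and their analogues for $G$) compose, as do the corresponding equalities on morphisms. Restricting the objects to those whose monoid of objects is freely generated (Definition~\ref{def: free on objects smc}), while keeping \emph{all} strict symmetric monoidal functors between them, then gives a full subcategory $\FOSSMC$; restricting the objects further to those of the shape described in Definition~\ref{def: free smc} gives the full subcategory $\FSSMC\subset\FOSSMC$. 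Fullness is immediate in both cases because we are cutting down only the objects.

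Next I would treat the grounded variants, where we additionally cut down the morphisms, retaining only those functors whose action on objects is a grounded monoid homomorphism. Two things must be verified: that identities qualify, and that the class is closed under composition. For identities, the object part of the identity functor on a free-on-objects strict SMC with generators $S$ is the identity homomorphism of $\Strings{S}$, which equals $\Strings{\id{S}}$, hence is grounded. For composition, if the object parts of $F$ and $G$ are $\Strings{f}$ and $\Strings{g}$ for functions $f,g$, then the object part of $G\circ F$ is $\Strings{f};\Strings{g}=\Strings{g\circ f}$ by functoriality of $S\mapsto\Strings{S}$ (the Fact on monoid homomorphisms quoted above), so it too is grounded. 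This yields the (non-full) subcategories $\GFOSSMC\subset\FOSSMC$ and $\GFSSMC\subset\FSSMC$, the latter sitting inside the former because an FSSMC is in particular a free-on-objects strict SMC.

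The only point requiring a little care — and the closest thing to an obstacle — is that \emph{grounded} homomorphism was defined only for homomorphisms between literal string monoids $\Strings{A}\to\Strings{B}$, whereas a free-on-objects strict SMC merely has a monoid of objects that is freely generated, i.e.\ isomorphic to, but not literally equal to, some $\Strings{S}$. To make $\GFOSSMC$ unambiguous I would note that the set of free generators of a free monoid is canonically determined (it is the set of non-unit elements admitting no non-trivial factorization), so ``grounded on objects'' has a well-defined meaning there; alternatively one fixes a generating set as part of the data of an object of $\FOSSMC$. For $\FSSMC$ and $\GFSSMC$ this subtlety does not arise, since the monoid of objects is $\Strings{S}$ on the nose. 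Everything else is a routine unwinding of definitions.
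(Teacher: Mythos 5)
Your proof is correct and is exactly the routine verification the paper has in mind: the paper states this as an unproved ``Fact'' (``easy to prove''), and your check that identities and composites of strict symmetric monoidal functors preserve the object and groundedness restrictions is the standard argument. Your closing remark about ``grounded'' only being literally defined on string monoids $\Strings{A}\to\Strings{B}$ is a fair observation, and your resolution (the generating set of a free monoid is canonically recoverable as its irreducible elements) is sound.
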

\section{Past approaches and development implications}
We now review past approaches to the problem of finding a correspondence between nets and FSSMCs, focusing on how they relate to our requirements.
\subsection{The symmetric approach}
In~\cite{Meseguer1990}, probably the most influential paper describing Petri nets from a categorical standpoint, nets and their morphisms are axiomatized as the objects and morphisms of a category. In~\cite{Degano1996} this process is taken a step further, with each Petri net being mapped to its category of computations, which is strict symmetric monoidal. Since inputs and outputs of transitions are multisets, a linearization problem arises, because objects in a symmetric monoidal category do not commute in the general case. The authors solve the problem by imposing commutativity: In~\cite{Sassone1996} it has been proven that the mapping in~\cite{Degano1996} is equivalent to mapping each net $N$ to a category as in Definition~\ref{def: free smc}, with $T$ being the set of transitions of the net, and then quotienting by the following equations, for $s, s',t$ object generators, $s \neq t$ and $(\alpha, A \otimes s \otimes t \otimes B, A' \otimes s' \otimes s' \otimes B')$ in $T$: 
\begin{equation*}
	\sigma_{s,t} = \id{s,t}
	\qquad
	(\id{A} \otimes \sigma_{s,s} \otimes \id{B});\alpha = \alpha
	\qquad 
	\alpha;(\id{A'} \otimes \sigma_{s',s'} \otimes \id{B'}) = \alpha
\end{equation*}
The idea is to completely annihilate symmetries, making morphisms totally indifferent to the causal relationships between tokens. 

This approach is unsatisfying for many reasons: First of all, as proven in~\cite{Sassone1995}, annihilating symmetries implies that the correspondence from nets to categories cannot be made functorial in any straightforward way. More importantly the category of computations of a net, defined in this way, \emph{has no real computational meaning}: It is not possible to keep track of the causal flow of tokens, and given that symmetric monoidal categories are almost never commutative on objects, the idea of mapping computations to a semantics is shattered. In particular, commutativity on objects is not satisfied by functional programming languages when we consider them as categories with data types as objects and functions between them as morphisms: Here, the monoidal product amounts to taking tuples, which do not commute. Such a strategy then violates Requirement~\ref{item: mapping is faithful and full} in Definition~\ref{def: requirements}.

On the other hand, in~\cite{Baez2018} the authors worked with grounded morphisms -- hence in~$\GPetri$ -- and defined a double-categorical framework which allows the gluing together of nets. The idea is very interesting, since it can be employed in using Petri nets to write code modularly. Moreover, if the choice of working in~$\GPetri$ reduces the expressiveness of Petri net morphisms, it doesn't necessarily constitute a severe limitation from the applicative standpoint. The additional double-categorical structure allows us to obtain a mapping between nets and symmetric monoidal categories which is, this time, functorial in a double-categorical interpretation. Unfortunately, to make things work, the authors had to impose commutativity on places once again, violating Requirement~\ref{item: mapping is faithful and full}.
\subsection{The pre-net approach}
In~\cite{Baldan2003}, commutativity was dropped in favor of another strategy, namely weakening the definition of Petri net to the one of \emph{pre-net}, and showing how pre-nets present free strict symmetric monoidal categories. The advantage is that pre-nets can be functorially -- and adjointly -- mapped to FSSMCs. Pre-nets are essentially an ordered version of Petri nets where the order in which places enter/exit transitions has to be specified explicitly.

This approach works, but the requirement that the ordering play nice with net morphisms gets in the way of using Petri nets to model complicated processes. Specifically, there may not be any morphisms between two pre-nets, even if there are between their underlying nets. This is shown in Figure~\ref{fig: pre-net failing}, where we have adopted the graphical formalism for pre-nets introduced in~\cite{Baldan2003}.
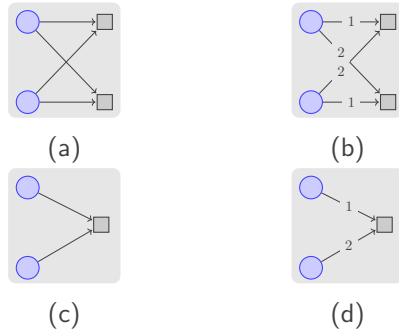
\begin{figure}[h]
	\centering
	\begin{subfigure}[t]{0.24\textwidth}\centering
		\scalebox{0.5}{
		\begin{tikzpicture}
			\node [place,tokens=0] (1a)  {};
	
			\node [place,tokens=0] (1b)  [below = 1.5cm of 1a] {};
			
			\node [transition] (2a) [right = 1.5cm of 1a]     {}
				edge [pre] node[] {} (1a)
				edge [pre] node[] {} (1b);
			\node [transition] (2b) [right = 1.5cm of 1b]     {}
				edge [pre] node[] {} (1a)
				edge [pre] node[] {} (1b);
		
			\begin{pgfonlayer}{background}
				\filldraw [line width=4mm,join=round,black!10]
				(1a.north -| 1a.west) rectangle (2b.south -| 2b.east);
			\end{pgfonlayer}
		\end{tikzpicture}
		}
	\caption{}\label{fig: first net}
	\end{subfigure}
	\begin{subfigure}[t]{0.24\textwidth}\centering
		\scalebox{0.5}{
		\begin{tikzpicture}
			\node [place,tokens=0] (1a)  {};
			
			\node [place,tokens=0] (1b)  [below = 1.5cm of 1a] {};
			
			\node [transition] (2a) [right = 1.5cm of 1a]     {}
				edge [pre] node[fill=black!10] {1} (1a)
				edge [pre] node[fill=black!10, below left] {2} (1b);
			\node [transition] (2b) [right = 1.5cm of 1b]     {}
				edge [pre] node[fill=black!10, above left] {2} (1a)
				edge [pre] node[fill=black!10] {1} (1b);
			
			\begin{pgfonlayer}{background}
				\filldraw [line width=4mm,join=round,black!10]
				(1a.north -| 1a.west) rectangle (2b.south -| 2b.east);
			\end{pgfonlayer}
		\end{tikzpicture}
		}
	\caption{}\label{fig: first pre-net}
	\end{subfigure}
	\\
	\begin{subfigure}[t]{0.24\textwidth}\centering
		\scalebox{0.5}{
		\begin{tikzpicture}
			\node [place,tokens=0] (1a)  {};
			
			\node [place,tokens=0] (1b)  [below = 1.5cm of 1a] {};
			
			\node [transition] (2a) [below right = 0.55cm and 1.5cm of 1a]     {}
				edge [pre] node[] {} (1a)
				edge [pre] node[] {} (1b);
			
			\begin{pgfonlayer}{background}
				\filldraw [line width=4mm,join=round,black!10]
				(1a.north -| 1a.west) rectangle (2b.south -| 2b.east);
			\end{pgfonlayer}	
		\end{tikzpicture}
		}
	\caption{}\label{fig: second net}
	\end{subfigure}
	\begin{subfigure}[t]{0.24\textwidth}\centering
		\scalebox{0.5}{
		\begin{tikzpicture}
			\node [place,tokens=0] (1a)  {};
			
			\node [place,tokens=0] (1b)  [below = 1.5cm of 1a] {};
			
			\node [transition] (2a) [below right = 0.55cm and 1.5cm of 1a]     {}
				edge [pre] node[fill=black!10] {1} (1a)
				edge [pre] node[fill=black!10] {2} (1b);
			
			\begin{pgfonlayer}{background}
				\filldraw [line width=4mm,join=round,black!10]
				(1a.north -| 1a.west) rectangle (2b.south -| 2b.east);
			\end{pgfonlayer}
		\end{tikzpicture}
		}
	\caption{}\label{fig: second pre-net}
	\end{subfigure}
	\caption{The net~\ref{fig: first net} is morphed to the net~\ref{fig: second net} by sending places to themselves and the two transitions in~\ref{fig: first net} to the one in~\ref{fig: second net}. There is no way to lift this to a morphism between pre-nets~\ref{fig: first pre-net} and~\ref{fig: second pre-net} without collapsing places.}\label{fig: pre-net failing}
\end{figure}

\noindent
Indeed, there is no functor from the category of Petri nets to the category of pre-nets, meaning that the ``specification of a net'' -- which is how pre-nets are interpreted in~\cite{Baldan2003} -- cannot be created on the fly in a way that respects morphisms of underlying nets. To understand why this is a problem, imagine the following scenario: A user draws a Petri net. To execute it, it is automatically mapped to its corresponding FSSMC. When the user runs the net, selecting which tokens have to be processed by which transition via the UI, the corresponding morphisms are composed in the FSSMC. Now the user decides to morph the net into another. It would be desirable to induce a functor between the corresponding FSSMCs, so that we could ``import'' all the histories from the former to the latter. Using pre-nets as a stepping stone between nets and FSSMCs we cannot, because the net transformation specified by the user may not correspond to a morphism of pre-nets. This violates Requirement~\ref{item: morphisms are automatically lifted} in Definition~\ref{def: requirements}, the only solution being to ask users to employ pre-nets directly. In doing so the graphical formalism becomes less intuitive and the allowed transformations greatly restricted. Even worse, it is not easy to intuitively understand when a net morphism is ``allowed'' and when it is not, with an obvious -- and substantial -- loss in applications. This violates Requirement~\ref{item: mapping is useful for intuition}. 
\subsection{The quotient approach}
In~\cite{Sassone1995}, Petri nets are taken as they are, and it's free strict symmetric monoidal categories being modified to accomodate an adjunction. We consider this approach as one of the most valid so far, and we generalized it in~\cite{Genovese2018}. Let us spell things out in detail:
\begin{definition}[From~\cite{Sassone1995}, Def. 3.5]\label{def: unordered executions}
	Given a Petri net $(N)$, we map it to the free-on-objects, strict symmetric monoidal category $\FoldSym N$ defined as follows:
	\begin{itemize}
		\item The monoid of objects of $\FoldSym N$ is freely generated by $\Pl{N}$;
		\item Morphisms are generated by the following introduction rules:
		\begin{gather*}
			\frac{s \in \Strings{\Pl{N}}}{\id{s}: s \to s}
			\quad \frac{s,t \in \Strings{\Pl{N}}}{\sigma_{s,t}: {s \tensor t} \to {t \tensor s}}\\\\
			\frac{t \in T_N}{t_{u,v}: u \to v} \quad \forall u,v.(\Multiplicity{}(u)= \Pin{t}{N} \wedge \Multiplicity{} (v)= \Pout{t}{N})\\\\
			\frac{\alpha: A \to B, \,\, \alpha': A' \to B'}{\alpha \tensor \alpha':  {A \tensor A'} \to {B \tensor B'}} 
			\qquad 		
			\frac{\alpha: A \to C, \,\, \beta: B \to C}{\alpha;\beta:  A \to C}
		\end{gather*}
		\item Morphisms are quotiented by the same axioms of Definition~\ref{def: free smc} plus the following one, for $p: u \to u'$, $q: v \to v'$ symmetries:
		\begin{equation}\label{eq: Sassone axiom}
			p;t_{u',v'} = t_{u,v};q
		\end{equation}
	\end{itemize}
\end{definition}
This strategy works, but is computationally unfeasible: For each transition $t$, we need to introduce as many generating morphisms as are the permutations of the input/output places of $t$. If for instance $t$ has $10$ different inputs and no outputs, we need to introduce $10!$ generating morphisms. Moreover, they have to be linked together by quotienting the category with supplementary equations, resulting in more computational overhead. (Quotients are difficult to deal with in code.) Although the quotienting overhead can be reduced (for instance using \texttt{postulate} in Idris), the superexponential explosion of generating morphisms is not avoidable. All this violates Requirement~\ref{item: mapping is computationally feasible}.

Furthermore, axiom~\ref{eq: Sassone axiom} in Definition~\ref{def: unordered executions} makes $\FoldSym(N)$ non-free, violating Requirement~\ref{item: mapping net to FSSMC}. This depends on the fact that axiom~\ref{eq: Sassone axiom} doesn't just relate generating morphisms to other generating morphisms, but also to themselves, in case they have repeated entries in their source and targets. Using wiring diagrams to represent morphisms, for a transition $t: s \otimes s \to v$ axiom~\ref{eq: Sassone axiom} entails that the diagrams in Figure~\ref{fig: sassone fails} are considered equal.
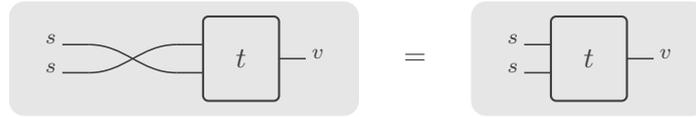
\begin{figure}[!h]
	\centering
	\begin{tikzpicture}[oriented WD, bb port length=10pt]
	\node[bb={2}{1}] (X1) {$t$};
	\draw[label] 
	node [yshift=2, right=2pt of X1_out1] {$v$}
	;

	\node[bb={2}{0}, transparent, left = 0.5cm of X1] (T1) {$t$};
	\draw[label] 
	node [yshift=2, left=2pt of T1_in1] {$s$}
	node [yshift=2, left=2pt of T1_in2] {$s$}
	;
	
	\draw (T1_in1') to (X1_in2);
	\draw (T1_in2') to (X1_in1);
	
	\node[right = 1.5cm of X1] (eq) {$=$};
	
	\node[bb={2}{1}, right = 1.5cm of eq] (X2) {$t$};
	\draw[label] 
	node [yshift=2, left=2pt of X2_in1] {$s$}
	node [yshift=2, left=2pt of X2_in2] {$s$}
	node [yshift=2, right=2pt of X2_out1] {$v$}
	;
	
	\begin{pgfonlayer}{background}
	\filldraw [line width=4mm,join=round,\backgrnd]
	([xshift=-5mm]X1.north -| T1_in1.west) rectangle ([xshift=5mm]X1.south -| X1_out1.east);
	
	\filldraw [line width=4mm,join=round,\backgrnd]
	([xshift=-5mm]X2.north -| X2_in1.west) rectangle ([xshift=5mm]X2.south -| X2_out1.east);
	\end{pgfonlayer}
	
	\end{tikzpicture}
	\caption{According to Definition~\ref{def: unordered executions} these diagrams are equal, making the category non-free}\label{fig: sassone fails}
\end{figure}

\noindent
This is a problem when it comes to mapping net computations to a semantics: As tuples are not commutative in the general case, our functorial correspondence cannot be symmetric monoidal.

Finally, the notion of morphism between the categories $\FoldSym(N)$ is greatly impractical, since morphisms between them are equivalence classes of functors. This equivalence collapses functors with radically different behaviors if we embrace the idea that symmetries are important to keep track of ``which-token-is-doing-what'' in a net. This is in contrast with Requirement~\ref{item: mapping is faithful and full}.
\section{The curse of linearization}
As we have seen, many approaches come very close to showing adjunctions or even equivalences between something that very closely resembles the category $\Petri$ and something that very closely resembles the category $\FSSMC$. However, no approach really nails down a ``computer-friendly'' correspondence between $\Petri$ and $\FSSMC$ themselves. Why is that? As we will see, the issue is that there is no canonical way to linearize multiset homomorphisms that is preserved by symmetric monoidal functors.

At the most basic level, to send transitions of a net to generators in a FSSMC we need to linearize its inputs and outputs from multisets to strings, hence we need to have a function $\Ordering{S}: \Msets{S} \to \Strings{S}$ such that $\Ordering{S};\Multiplicity{} = \id{\Msets{S}}$ for every set of generators $S$.
For obvious reasons, there is no canonical choice for such a function without imposing additional structure on the generating set and, according to our desiderata, we don't want this additional structure to ``get in the way'' when we use nets for practical purposes.

Luckily, there are many different ways to obtain $\Ordering{S}$ without having to impose unreasonable requirements on the net. We present one of the many possible approaches in the Appendix~\ref{app: Appendix}, and for now we just suppose to have $\Ordering{}$ defined. With it, we can cleanly linearize transition inputs/outputs and generate a FSSMC from a  net.
\begin{definition}\label{def: free functor on objects}
	Given a Petri net $N$, we map it to the FSSMC $\OFoldSym N$ generated as in Definition~\ref{def: free smc}, with $\Pl{N}$ as the set of object generators, and generating morphisms given by
	\begin{equation*}
		T := \suchthat{(t,\Ordering{N}(\Pin{t}{N}),\Ordering{N}(\Pout{t}{N}))}{t \in \Tr{N}}
	\end{equation*}
\end{definition}
It is useful to compare our mapping with the one provided in Definition~\ref{def: unordered executions}: The main difference between the two is that the use of the ordering function $\Ordering{N}$ in Definition~\ref{def: free functor on objects} is replaced by a universally quantified statement over multiplicities using $\Multiplicity{}$ in Definition~\ref{def: unordered executions}. The computational overhead introduced by Definition~\ref{def: unordered executions} ultimately depends on the fact that the function $\Ordering{N}$, which provides a canonical choice for generating morphisms, cannot be defined without imposing further structure on the net. On the contrary, in Definition~\ref{def: free functor on objects} we are able to use it to reduce the number of generating morphisms introduced from $n! \cdot m!$ in the worst case to just $1$, while getting rid of quotients altogether and keeping the category free at the same time.

We conclude the section with the observation that the mapping from nets to FSSMCs is invertible, the proof of which is obvious from the definitions:
\begin{definition}\label{def: forgetful functor on objects}
	To every FSSMC $\CategoryC$, we associate the net $\UnFoldSym \CategoryC$ defined as follows:
	\begin{itemize}
		\item Places of $\UnFoldSym \CategoryC$ are the generating objects of $\CategoryC$;
		\item Transitions of $\UnFoldSym \CategoryC$ are the generating morphisms of $\CategoryC$;
		\item $\Pin{t}{\UnFoldSym \CategoryC}$, for $t: A \to B$ generating morphism of $\CategoryC$, is defined as $\Multiplicity{}(A)$;
		\item $\Pout{t}{\UnFoldSym \CategoryC}$, for $t: A \to B$ generating morphism of $\CategoryC$, is defined as $\Multiplicity{}(B)$.
	\end{itemize}
\end{definition}
\begin{proposition}\label{prop: fold-unfold isomorphism}
	For any net $N$, $\UnFoldSym \OFoldSym N$ is isomorphic to $N$. For each FSSMC $\CategoryC$, $\OFoldSym \UnFoldSym \CategoryC$ is isomorphic to $\CategoryC$.
\end{proposition}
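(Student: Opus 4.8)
The plan is to establish the two isomorphisms separately, observing that the first is essentially an equality while the second needs a small amount of care because $\Ordering{}$ is only a \emph{one-sided} inverse of $\Multiplicity{}$.

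For $\UnFoldSym \OFoldSym N \cong N$, I would simply unfold the two definitions. By Definition~\ref{def: free functor on objects}, $\OFoldSym N$ has object generators $\Pl{N}$ and exactly one generating morphism $(t, \Ordering{N}(\Pin{t}{N}), \Ordering{N}(\Pout{t}{N}))$ for each $t \in \Tr{N}$. Applying Definition~\ref{def: forgetful functor on objects}, the places of $\UnFoldSym \OFoldSym N$ are again $\Pl{N}$, its transitions are in bijection with $\Tr{N}$ via $t \mapsto (t,\dots)$ (distinct transitions yield distinct triples, since the first component records $t$), and the input of the transition coming from $t$ is $\Multiplicity{}(\Ordering{N}(\Pin{t}{N})) = \Pin{t}{N}$ by the defining property $\Ordering{N};\Multiplicity{} = \id{\Msets{\Pl{N}}}$ of the linearization map; likewise for outputs. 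The pair consisting of the identity multiset homomorphism on $\Msets{\Pl{N}}$ together with this bijection on transitions is then a morphism of nets whose two defining squares commute on the nose, and it is visibly invertible, so $\UnFoldSym \OFoldSym N \cong N$.

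For $\OFoldSym \UnFoldSym \CategoryC \cong \CategoryC$, let $S$ be the set of object generators of $\CategoryC$; then $\CategoryC$ and $\OFoldSym \UnFoldSym \CategoryC$ share the object monoid $\Strings{S}$, and their generators are in bijection up to a relabelling of (co)domains: a generator $\alpha : s \to t$ of $\CategoryC$ produces, after $\UnFoldSym$ and then $\OFoldSym$, a generator $\alpha'$ with domain $\Ordering{S}(\Multiplicity{}(s))$ and codomain $\Ordering{S}(\Multiplicity{}(t))$. These strings need not be $s$ and $t$, because $\Multiplicity{};\Ordering{}$ is a "sorting" operation rather than the identity; but $s$ and $\Ordering{S}(\Multiplicity{}(s))$ always have the same multiplicity, so there is a symmetry $\sigma_s : s \to \Ordering{S}(\Multiplicity{}(s))$, and I would fix one such choice for every $s$. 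I would then define $F : \CategoryC \to \OFoldSym \UnFoldSym \CategoryC$ on generators by $\alpha \mapsto \sigma_s ; \alpha' ; \sigma_t^{-1}$ and $G$ in the opposite direction by $\alpha' \mapsto \sigma_s^{-1} ; \alpha ; \sigma_t$. Since both categories are presented as in Definition~\ref{def: free smc} by generators subject only to the SMC axioms, each such assignment extends uniquely to a strict symmetric monoidal functor; and a short computation on generators, using $\sigma_s ; \sigma_s^{-1} = \id{s}$ and the fact that $F$ and $G$ fix the common object monoid and hence every symmetry, shows $G \circ F$ and $F \circ G$ agree with the respective identities on all generators, so they are the identity. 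Hence $\OFoldSym \UnFoldSym \CategoryC \cong \CategoryC$.

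I expect the only genuine subtlety to lie in the second direction: one must resist claiming an equality — which fails precisely when generators have repeated entries in their (co)domains and $\Multiplicity{};\Ordering{}$ reorders them — and instead absorb the reordering into the symmetries $\sigma_s$, taking care to reuse the \emph{same} choice of $\sigma_s$ in $F$ and $G$ so that the conjugations cancel. Everything else (functoriality, strict monoidality, mutual inverseness) then comes for free, exactly because both $\CategoryC$ and $\OFoldSym \UnFoldSym \CategoryC$ are free symmetric monoidal categories, so that specifying the functors on object and morphism generators and checking that the images land in the correct hom-sets is all that is needed.
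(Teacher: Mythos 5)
Your proof is correct. The paper itself offers no argument here --- it simply declares the statement ``obvious from the definitions'' --- so there is no competing technique to compare against; your write-up is the natural elaboration of what the paper leaves implicit. Both halves check out: the first direction reduces to the identity $\Ordering{N};\Multiplicity{}=\id{\Msets{\Pl{N}}}$ exactly as you say, and in the second direction you correctly identify the one genuine subtlety, namely that $\Multiplicity{};\Ordering{}$ sorts the (co)domains of generators so that one only gets an isomorphism after conjugating each generator by a chosen symmetry $\sigma_s : s \to \Ordering{S}(\Multiplicity{}(s))$. Your construction also makes the resulting isomorphism transition-preserving (each generator goes to a morphism of the form $\sigma;t;\sigma'$), which is precisely the property the paper invokes later when it remarks that the isomorphism $\CategoryC \cong \OFoldSym\UnFoldSym\CategoryC$ survives the restriction to transition-preserving functors; so your extra care is not wasted, it is exactly what the rest of the paper needs.
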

\subsection{Transition-preserving functors}
We are quite happy: There are ways to linearize the input/output places of Petri nets that allow to biject them to FSSMCs. We now need to find a suitable notion of morphism between FSSMCs which can be considered as the linearized counterpart of a net morphism. In our conceptual framework, morphism generators represent net transitions, while symmetries and identities represent the ``necessary bookkeeping'' to deterministically spell out the causal flow of tokens. So, it seems reasonable to restrict to functors that send generating morphisms to generating morphisms. We make this precise:
\begin{definition}
	Let $\CategoryC, \CategoryD$ be FSSMCs. A strict monoidal functor $F:\CategoryC \to \CategoryD$ is called \emph{transition-preserving} if it maps generating morphisms in $\CategoryC$ to morphisms in $\CategoryD$ of the form $\sigma;t;\sigma'$, where $t$ is a generating morphism and $\sigma, \sigma'$ are symmetries.
\end{definition}
Notice how functors between FSSMCs as defined in~\cite{Baldan2003} are transition-preserving, as are representatives of the equivalence classes that are used in~\cite{Sassone1995} to define functors. It can easily be proven that our definition behaves well with respect to the categorical structure of $\FSSMC$, and in fact:
\begin{proposition}\label{prop: transition-preserving subcategory}
	FSSMCs and transition-preserving functors form a subcategory of $\FSSMC$. Similarly, FSSMCs and grounded, transition-preserving functors form a subcategory of $\GFSSMC$. From now on we will use $\FSSMC$ and $\GFSSMC$ to denote such restricted categories.
\end{proposition}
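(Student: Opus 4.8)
The plan is to check the two conditions that make the transition-preserving functors a subcategory of $\FSSMC$: that this class contains the identity functor on every FSSMC and is closed under composition. Associativity and the unit laws are then inherited from $\FSSMC$. Two elementary closure facts do essentially all the work. First, the composite and the tensor of symmetries is again a symmetry — this is immediate, since a symmetry is by construction a morphism built from identities and the generators $\sigma_{s,t}$ using $\tensor$ and $;$. Second, a strict symmetric monoidal functor $F$ carries each symmetry of its domain to a symmetry of its codomain, because it sends each $\sigma_{s,t}$ to $\sigma_{Fs,Ft}$ and commutes with $\id{}$, $\tensor$, and $;$.

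With these in hand the verification is short. For the identity: given a generating morphism $\alpha: s \to t$ of a FSSMC $\CategoryC$ we may write $\alpha = \id{s};\alpha;\id{t}$, and since identities are symmetries this exhibits $\id{\CategoryC}(\alpha)$ in the required form $\sigma;\alpha;\sigma'$. For composition: let $F: \CategoryC \to \CategoryD$ and $G: \CategoryD \to \CategoryE$ be transition-preserving and let $\alpha$ be a generating morphism of $\CategoryC$. By hypothesis $F(\alpha) = \sigma_1;\beta;\sigma_2$ with $\sigma_1,\sigma_2$ symmetries of $\CategoryD$ and $\beta$ a generating morphism of $\CategoryD$, so by functoriality $G(F(\alpha)) = G(\sigma_1);G(\beta);G(\sigma_2)$. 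By the second closure fact $G(\sigma_1)$ and $G(\sigma_2)$ are symmetries of $\CategoryE$, and since $G$ is transition-preserving $G(\beta) = \tau_1;\gamma;\tau_2$ for symmetries $\tau_1,\tau_2$ of $\CategoryE$ and a generating morphism $\gamma$. Regrouping, $G(F(\alpha)) = (G(\sigma_1);\tau_1);\gamma;(\tau_2;G(\sigma_2))$, which has the form $\sigma;\gamma;\sigma'$ by the first closure fact. Hence $G \circ F$ is transition-preserving, which establishes the first statement.

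For the grounded variant I would additionally note that the object component of a grounded functor is $\Strings{f}$ for a function $f$ on object generators; the identity functor is grounded (take $f$ the identity), and the object component of a composite is $\Strings{g} \circ \Strings{f} = \Strings{g \circ f}$, again grounded. Together with the morphism-level argument above, this shows that the grounded transition-preserving functors contain identities and are closed under composition, hence form a subcategory of $\GFSSMC$. I do not expect a genuine obstacle here; the only point demanding care is pinning down the notion of \emph{symmetry} tightly enough that its closure under $;$ and $\tensor$ and the fact that it contains all identities are manifest — once that bookkeeping is settled, the proposition is a one-line diagram chase.
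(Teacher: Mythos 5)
Your proof is correct and follows essentially the same route as the paper's: identities are trivially transition-preserving, and for composites one expands $G(F(\alpha))$ and regroups, using that a strict symmetric monoidal functor sends symmetries to symmetries and that symmetries are closed under composition. You are in fact slightly more complete than the paper, which leaves the identity case to a "clearly" and does not spell out the grounded variant at all.
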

\begin{proof}
	Clearly, identity functors are transition-preserving. Now let $F:\CategoryC \to \CategoryD$ and $G: \CategoryD \to \CategoryE$. If $t_\CategoryC$ is a generator of $\CategoryC$, then it is mapped by $F$ to $\sigma_F;t_\CategoryD;\sigma'_F$ in $\CategoryD$. $t_\CategoryD$ is itself a generator, so it is mapped by $G$ to $\sigma_G; t_\CategoryE; \sigma'_G$. Putting all of this together, we find that $F;G$ maps the generator $t_\CategoryC$ to:
	\begin{align*}
		GFt_\CategoryC &= G(\sigma_F;t_\CategoryD;\sigma'_F)\\
		&= G\sigma_F ; Gt_\CategoryD; G\sigma'_F\\
		&= G\sigma_F; (\sigma_G;t_\CategoryE;\sigma'_G); G\sigma_F\\
		&= (G\sigma_F; \sigma_G);t_\CategoryE;(\sigma'_G; G\sigma_F)
	\end{align*}
	Because $G$ is strict monoidal, $G\sigma_F$ and $G\sigma'_F$ are symmetries. Hence, $G\sigma_F;\sigma_G$ and $G\sigma'_F;\sigma'_G$ are symmetries as well, proving that $F;G$ is transition-preserving.
\end{proof}
The choice of restricting to transition-preserving functors does not break any of the requirements spelled out in Definition~\ref{def: requirements}: Non-transition-preserving functors do not have any interpretation as morphisms of net computations.

It is also worth noting that the isomorphism between $\CategoryC$ and $\OFoldSym \UnFoldSym \CategoryC$ of Proposition~\ref{prop: fold-unfold isomorphism} is transition-preserving, so we can still ``go back and forth'' from nets to computations, and vice-versa, in our restricted setting.

To be sure that our definition is sensible, we still have to check that we can lift net morphisms to transition-preserving functors. What we have in mind is something along the lines of the following proposition:
\begin{proposition}[Sketch]\label{def: free functor on morphisms}
	Let $f: N \to M$ be a morphism of nets. $f$ induces a transition-preserving functor $\OFoldSym f: \OFoldSym N \to \OFoldSym M$ as follows:
	\begin{itemize}
		\item A generating object $s$ of $\OFoldSym N$ is a place in $N$, so we set $(\OFoldSym f)s = \Ordering{M} \PlM{f}(s)$;
		\item We extend the mapping $\OFoldSym f$ to all objects by using the fact that the monoid of objects of $\OFoldSym N$ is free;
		\item On morphisms, we send identities to identities and symmetries to symmetries. If $t: \Ordering{N}\Pin{t}{N} \to \Ordering{N}{\Pout{t}{N}}$ is a generator of $\OFoldSym N$, we send it to:
		\begin{equation*}
			(\OFoldSym f) \Ordering{N}\Pin{t}{N} \xrightarrow{\bar{\sigma}} \Ordering{M}\Pin{\TrM{f}t}{M} \xrightarrow{\TrM{f}t} \Ordering{M}\Pout{\TrM{f}t}{M} \xrightarrow{\bar{\sigma}'} (\OFoldSym f) \Ordering{N}\Pin{t}{N}
		\end{equation*}
		\item We extend $\OFoldSym f$ to all the remaining morphisms by (monoidal) composition.
	\end{itemize}
	If $f$ is grounded, so is $\OFoldSym f$.
\end{proposition}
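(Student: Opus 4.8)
The plan is to verify, in this order, that the recipe (i)~defines a monoid homomorphism on objects, (ii)~extends to a strict symmetric monoidal functor, (iii)~is transition-preserving, and (iv)~is grounded when $f$ is. By Definition~\ref{def: free functor on objects}, $\OFoldSym N$ is presented exactly as in Definition~\ref{def: free smc}, so a strict symmetric monoidal functor out of it is completely determined by a function from the object generators $\Pl{N}$ to the objects of $\OFoldSym M$, together with a choice, for each generating morphism (one per transition of $N$), of a morphism of $\OFoldSym M$ with the correct source and target; the defining equations of Definition~\ref{def: free smc} need not be checked separately, since they are just the strict symmetric monoidal axioms and therefore hold for \emph{any} images as soon as identities go to identities, each $\sigma_{s,t}$ goes to $\sigma_{(\OFoldSym f)s,(\OFoldSym f)t}$, and $\tensor$ and $;$ are preserved on the nose. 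So the work splits into: the object assignment $s\mapsto\Ordering{M}\PlM{f}(s)$ extends uniquely and multiplicatively to a homomorphism $\Strings{\Pl{N}}\to\Strings{\Pl{M}}$ because $\Strings{\Pl{N}}$ is free on $\Pl{N}$ (so (i) is immediate); and the composite assigned to each generator is well-typed.

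This last point is the crux. Fix a transition $t$ and put $\mu:=\Pin{t}{N}$, a multiset over $\Pl{N}$, so the generator $t$ has source $\Ordering{N}(\mu)$ in $\OFoldSym N$. Writing $\Ordering{N}(\mu)=p_1\cdots p_k$, the object map sends it to the concatenation $\Ordering{M}\PlM{f}(p_1)\cdots\Ordering{M}\PlM{f}(p_k)$, where $\PlM{f}(p_i)$ abbreviates $\PlM{f}$ applied to the singleton multiset at $p_i$. Since $\Multiplicity{}$ is a monoid homomorphism, $\Ordering{M};\Multiplicity{}=\id{}$, and $\PlM{f}$ is a homomorphism, the multiplicity of this concatenation is $\sum_i\PlM{f}(p_i)=\PlM{f}\bigl(\Multiplicity{}(\Ordering{N}(\mu))\bigr)=\PlM{f}(\mu)$, and the net-morphism square $\Pin{-}{N};\PlM{f}=\TrM{f};\Pin{-}{M}$ rewrites this as $\Pin{\TrM{f}t}{M}=\Multiplicity{}\bigl(\Ordering{M}\Pin{\TrM{f}t}{M}\bigr)$. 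Thus $(\OFoldSym f)\Ordering{N}\Pin{t}{N}$ and $\Ordering{M}\Pin{\TrM{f}t}{M}$ are strings with the same multiplicity, hence permutations of one another, and so are joined by a symmetry $\bar{\sigma}$ in $\OFoldSym M$; the identical computation with outputs supplies $\bar{\sigma}'\colon\Ordering{M}\Pout{\TrM{f}t}{M}\to(\OFoldSym f)\Ordering{N}\Pout{t}{N}$. With any such choices the composite $\bar{\sigma};\TrM{f}t;\bar{\sigma}'$ is a morphism $(\OFoldSym f)\Ordering{N}\Pin{t}{N}\to(\OFoldSym f)\Ordering{N}\Pout{t}{N}$, so by the free-presentation argument above we get a strict symmetric monoidal functor $\OFoldSym f$; it is transition-preserving by construction, each generator $t$ being sent to $\bar{\sigma};\TrM{f}t;\bar{\sigma}'$ with $\TrM{f}t$ a generating morphism of $\OFoldSym M$.

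For the final clause, suppose $f$ is grounded, say $\PlM{f}=\Msets{g}$ for $g\colon\Pl{N}\to\Pl{M}$. On an object generator $s$ we then have $(\OFoldSym f)s=\Ordering{M}\PlM{f}(s)=\Ordering{M}(\{g(s)\})$, and since $\Ordering{M};\Multiplicity{}=\id{}$ forces $\Ordering{M}$ to send a singleton multiset to the unique length-one string on that element, this equals the one-letter string $g(s)$. Hence the object part of $\OFoldSym f$ is $\Strings{g}$, a grounded homomorphism, so $\OFoldSym f$ is a grounded, transition-preserving functor, i.e.\ a morphism of $\GFSSMC$.

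The only step I expect to require care is the multiplicity bookkeeping in the second paragraph: one must keep straight that $\PlM{f}$ acts on multisets, that $\Multiplicity{}$ and $\Ordering{}$ are related only through $\Ordering{};\Multiplicity{}=\id{}$ (and not the other way around), and that multiset sum on one side matches string concatenation on the other; everything else is a mechanical appeal to the universal property of the free presentation. Worth noting in passing (though not needed here) is that $\bar{\sigma}$ and $\bar{\sigma}'$ are genuinely \emph{chosen}, not canonical, as soon as the relevant multiset has a repeated element — precisely the obstruction, taken up elsewhere in the paper, to assembling these $\OFoldSym f$ into a functor $\Petri\to\FSSMC$.
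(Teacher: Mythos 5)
Your proposal is correct and follows essentially the same route as the paper: freeness of the presentation handles the object part and all the equations, the only substantive check is that the source and target of $\bar{\sigma};\TrM{f}t;\bar{\sigma}'$ are well-typed, and you establish this by exactly the same multiplicity computation (expand $\Ordering{N}\Pin{t}{N}$ as a string, push $\Multiplicity{}$ through using $\Ordering{};\Multiplicity{}=\id{}$ and the homomorphism property of $\PlM{f}$, then invoke the net-morphism square) to conclude the two strings are permutations of one another. Your treatment of the grounded case is slightly more explicit than the paper's one-line remark, and you correctly read the codomain of $\bar{\sigma}'$ as $(\OFoldSym f)\Ordering{N}\Pout{t}{N}$, but the argument is the same.
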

The way we set things up in Proposition~\ref{def: free functor on morphisms} seems to be the only sensible thing to do. Nevertheless, there is a problem: For each generating morphism of $\OFoldSym N$, how do we define the symmetries $\bar{\sigma}$, $\bar{\sigma}'$? This is the key point that makes getting a functorial correspondence from nets to FSSMCs so difficult: Linearizing multisets to strings is easy, while linearizing multiset \emph{homomorphisms} to string homomorphisms is not! Before making this precise, we prove that there are choices of $\bar{\sigma}, \bar{\sigma}'$ for which Proposition~\ref{def: free functor on morphisms} holds.
\begin{proof}[Proof of Proposition~\ref{def: free functor on morphisms}]
	On objects there is nothing to prove, since the mapping is obtained by applying freeness on the mapping on generators, which makes it monoidal by definition. This is also sufficient to prove the last statement: If $f$ is a grounded homomorphism of multisets on places, $\OFoldSym f$ is a grounded monoid homomorphism on objects.
	
	On morphisms, identities are mapped to identities and symmetries to symmetries, so symmetry and identity preservation laws hold trivially. We need to check that the functor is well-defined on generating morphisms, hence proving:
	\begin{equation*}
		\Multiplicity{}((\OFoldSym f) \Ordering{N}\Pin{t}{N}) =  		
		\Multiplicity{}(\Ordering{M}\Pin{\TrM{f}t}{M}) 
		\quad
		\Multiplicity{}(\Ordering{M}\Pout{\TrM{f}t}{M}) = 
		\Multiplicity{}((\OFoldSym f) \Ordering{N}\Pin{t}{N})
	\end{equation*}
	We focus on the first one, the proof of the second being analogous. $\Ordering{N}\Pin{t}{N}$ is an object of $\OFoldSym N$, and so it is a string $s_1 \dots s_n$. By definition, 
	\begin{align*}
		(\OFoldSym f) \Ordering{N}\Pin{t}{N} 
		&= (\OFoldSym f) (s_1 \dots s_n) \\
		&= (\OFoldSym f) s_1 \dots (\OFoldSym f) s_n \\
		&= \Ordering{M} \PlM{f}(s_1) \dots \Ordering{M} \PlM{f}(s_n)
	\end{align*}
	Hence, taking multiplicities, 
	\begin{align*}
		\Multiplicity{}((\OFoldSym f) \Ordering{N}\Pin{t}{N}) 
		&= \Multiplicity{}(\Ordering{M} \PlM{f}(s_1) \dots \Ordering{M} \PlM{f}(s_n)) \\
		&= \Multiplicity{}(\PlM{f}(s_1) \dots \PlM{f}(s_n)) \\
		&= \PlM{f}\Multiplicity{}(s_1 \dots s_n) \\
		&= \PlM{f}(\Pin{t}{N}) \\
		&= \Pin{\TrM{f}t}{M} \\
		&= \Multiplicity{}(\Ordering{M}\Pin{\TrM{f}t}{M}) \\
		&= \Multiplicity{}((\OFoldSym f) \Ordering{N}\Pin{t}{N})
	\end{align*}
	This is enough to guarantee that $(\OFoldSym f) \Ordering{N}\Pin{t}{N}$ and $\Ordering{M}\Pin{\TrM{f}t}{M}$ are permutation of one another, so there exists a symmetry $\bar{\sigma}$ between them. An obvious consequence is also that $\OFoldSym f$ is transition-preserving, by definition.
	
	Preservation of monoidal products and compositions holds trivially since we defined them freely from generating morphisms, identities and symmetries. 
\end{proof}
Proposition~\ref{def: free functor on morphisms} says that if for each generating morphism we can pick a symmetry -- any symmetry -- between $(\OFoldSym f) \Ordering{N}\Pin{t}{N}$ and $\Ordering{M}\Pin{\TrM{f}t}{M}$ on one hand, and between $\Ordering{M}\Pout{\TrM{f}t}{M}$ and $(\OFoldSym f) \Ordering{N}\Pin{t}{N}$ on the other, then we get a functor. Clearly we want this choice to respect functor composition. Alas, this is not possible without imposing further structure on the nets in a way that violates our requirements. We will demonstrate this by investigating the structure of symmetries in a FSSMC.
\subsection{Chasing symmetries}
\begin{proposition}\label{prop: different objects unique symmetry}
	Let $\sigma: s \to s'$ be a morphism in $\Sym{S}$. If $s$ can be written as a monoidal product $s_1 \dots s_n$ of elements of $S$, with $s_i = s_j$ iff $i = j$, then for any other symmetry $\sigma': s \to s'$ it is $\sigma = \sigma'$.
\end{proposition}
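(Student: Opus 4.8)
The plan is to reduce the statement to the familiar fact that the morphisms of $\Sym{S}$ are exactly the ``label-preserving permutations'', via an underlying-permutation functor. Concretely, I would first build a strict symmetric monoidal functor $U \colon \Sym{S} \to \mathbf{Bij}$, where $\mathbf{Bij}$ is the skeletal groupoid of finite sets $\{1,\dots,n\}$ and bijections, with disjoint union as tensor: on objects $U$ sends a word $w = s_1 \dots s_n$ to $\{1,\dots,n\}$, and on generators it sends $\id{w}$ to the identity and each symmetry $\sigma_{u,v}$ to the block transposition of $\{1,\dots,|u|+|v|\}$. Extending by $\tensor$ and composition, one checks that every equation of Definition~\ref{def: free smc} becomes a routine identity of bijections, so $U$ is well defined. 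By construction $U$ also tracks the labelling: writing $\hat w \colon \{1,\dots,n\} \to S$ for the function $i \mapsto s_i$, the bijection $\pi_\sigma := U\sigma$ attached to a symmetry $\sigma \colon w \to w'$ satisfies $\hat{w'}(\pi_\sigma(i)) = \hat w(i)$ for all $i$, and $\pi_{\sigma;\tau} = \pi_\tau \circ \pi_\sigma$.

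The second ingredient is faithfulness of $U$: two symmetries with the same source, the same target, and the same underlying permutation are equal. This is exactly the coherence theorem for symmetric strict monoidal categories, equivalently the statement that in $\Sym{S}$ the hom-set from $w$ to $w'$ is in bijection with the set of label-preserving permutations. I would either invoke it directly, or prove the special case I need by a normal-form argument: using functoriality and the interchange law, every symmetry rewrites to a composite of elementary transpositions $\id{a} \tensor \sigma_{x,y} \tensor \id{b}$ (with $x,y \in S$) realizing $\pi_\sigma$, and any two such composites realizing the same permutation are identified by the relations of Definition~\ref{def: free smc}, which for symmetries amount to the Coxeter relations of the symmetric group together with involutivity. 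This faithfulness step is the real content and the main obstacle; everything else is bookkeeping.

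Granting these two points, the proposition is immediate. Suppose $\sigma, \sigma' \colon s \to s'$ with $s = s_1 \dots s_n$ and the $s_i$ pairwise distinct. Since a symmetry $s \to s'$ exists, $s$ and $s'$ are permutations of one another, so each $s_i$ occurs exactly once in $s'$; hence for every $i$ there is a unique position $j$ with $\hat{s'}(j) = s_i$, and the labelling compatibility forces $\pi_\sigma(i) = j = \pi_{\sigma'}(i)$. Thus $U\sigma = U\sigma'$, and by faithfulness $\sigma = \sigma'$.

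One could instead try to avoid naming $\mathbf{Bij}$ and argue by induction on $n$: peel off the last letter $s_n$, observe that it occupies a forced position in $s'$, and argue that $\sigma$ must factor as a symmetry on $s_1 \dots s_{n-1}$ followed by routing $s_n$ into its slot. However, justifying that factorisation rigorously is itself a coherence-style argument, so I do not expect a genuine shortcut around the faithfulness step.
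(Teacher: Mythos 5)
Your proof is correct and is essentially the paper's argument in algebraic rather than string-diagrammatic clothing: both reduce to the coherence theorem for symmetric monoidal categories (a symmetry in $\Sym{S}$ is determined by its underlying label-preserving permutation), and then observe that pairwise distinctness of the $s_i$ forces that permutation to be unique. The paper phrases the coherence step as ``only connectivity of wires matters,'' citing Selinger, which is exactly the faithfulness of your functor $U$; you are more explicit than the paper in isolating that faithfulness as the genuine content of the proof.
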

\begin{proof}
	Using the coherence theorem for string diagrams of symmetric monoidal categories~\cite{Selinger2010}, $\Sym{S}$ is a category where objects are wires and morphisms are identities and crossings. So, $\sigma:s \to s'$ can be represented as a bunch of wires, each going from an $s_i$ in $s$ to exactly one $s'_j$ in $s'$, with $s_i = s'_j$. Coherence for symmetric monoidal categories means that only connectivity matters, that is, two symmetries $\sigma, \sigma'$ are considered equal if the wiring of $\sigma$ can be topologically rearranged into the wiring of $\sigma'$, keeping the ends of the wires fixed in their positions, and without bending wires into a ``U'' shape.
	
	Since $\Strings{S}$ is free and generated by $S$, the decomposition of $s$ into object generators $s_1 \dots s_n$ is unique. The same can be said of $s'$ with $s' = s'_1 \dots s'_m$, and $\sigma: s \to s'$ implies that $n = m$. Given that all $s_i$ are distinct, each $i$ has exactly one corresponding $j$ such that $s_i = s'_j$, and $\sigma$ must connect the two. This completes the proof, since any other $\sigma$ is forced to make the same connection between $s_i$ and $s'_j$ for an arbitrary $i$, and by coherence, connectivity of ends is the only thing that matters to rearrange a tangle of wires into another. 
\end{proof}
Notice how the proposition above does not hold if we allow repeated generating objects in the decomposition of $s$. In fact, if $s = s_1 s_1$ for some generating object $s_1$, the morphisms below cannot be topologically deformed into one another:
\begin{figure}[!h]
	\centering
		\begin{tikzpicture}[oriented WD, bb port length=10pt]
		\node[bb={2}{2}, transparent] (X1) {};
		\draw[label] 
		node [yshift=2, left=2pt of X1_in1] {$s_1$}
		node [yshift=2, left=2pt of X1_in2] {$s_1$}
		node [yshift=2, right=2pt of X1_out1] {$s_1$}
		node [yshift=2, right=2pt of X1_out2] {$s_1$}
		;
		
		\draw (X1_in1') to (X1_out2');
		\draw (X1_in2') to (X1_out1');
		
		\node[bb={2}{2}, transparent, right = 5cm of X1] (X2) {};
		\draw[label] 
		node [yshift=2, left=2pt of X2_in1] {$s_1$}
		node [yshift=2, left=2pt of X2_in2] {$s_1$}
		node [yshift=2, right=2pt of X2_out1] {$s_1$}
		node [yshift=2, right=2pt of X2_out2] {$s_1$}
		;

		\draw (X2_in1') to (X2_out1');
		\draw (X2_in2') to (X2_out2');
		
		\begin{pgfonlayer}{background}
			\filldraw [line width=4mm,join=round,\backgrnd]
			([xshift=-5mm]X1.north -| X1_in1.west) rectangle ([xshift=5mm]X1.south -| X1_out1.east);
		
			\filldraw [line width=4mm,join=round,\backgrnd]
			([xshift=-5mm]X2.north -| X2_in1.west) rectangle ([xshift=5mm]X2.south -| X2_out1.east);
		\end{pgfonlayer}
		
		\end{tikzpicture}
\end{figure}

\noindent
Focusing on the counterexamples a bit more, we see that the problem with repeated generating objects is that they may or may not be swapped, and that these two choices are not equivalent. Luckily, there is a clean way to establish whether or not a symmetry swaps the same object generators.
\begin{definition}
	A morphism in $\Sym{S}$ is called a \emph{basic block} if it is of the form $\id{u} \tensor \sigma_{s_1,s_2} \tensor \id{t}$ for some objects $u,t$ and some object generators $s_1,s_2$.
\end{definition}
\begin{fact}
	Using coherence conditions, any symmetry $\sigma: s \to s'$ can be written as a finite composition of basic blocks.
\end{fact}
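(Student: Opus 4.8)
The plan is to combine the string-diagram coherence theorem for symmetric monoidal categories with the elementary fact that the symmetric group is generated by adjacent transpositions. By the coherence theorem~\cite{Selinger2010} --- the same one invoked in the proof of Proposition~\ref{prop: different objects unique symmetry} --- every morphism of $\Sym{S}$ is a tangle of non-U-turning wires, each running from a generator occurrence in the source string to a generator occurrence with the same label in the target string, and such a morphism is determined by which occurrence is connected to which. Concretely, since $\Strings{S}$ is free, the decompositions $s = s_1\dots s_n$ and $s' = s'_1\dots s'_n$ into object generators are unique (and $n = |s| = |s'|$ because $\sigma$ is invertible), so $\sigma : s \to s'$ is encoded by a permutation $\pi$ of $\{1,\dots,n\}$ with $s_i = s'_{\pi(i)}$. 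The whole point will be to realise $\pi$ as a product of adjacent transpositions and to read off the corresponding factorisation of $\sigma$ into basic blocks.

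First I would put the diagram of $\sigma$ in generic position, isotoping it without introducing U-turns and keeping the endpoints fixed, as coherence permits, so that no two crossings of wires occur at the same height. Slicing the diagram by horizontal lines placed between consecutive crossing heights then exhibits $\sigma$ as a finite (possibly empty) vertical composite of layers, each containing exactly one crossing of two adjacent wires and acting as the identity on all the others. Algebraically a layer between two intermediate strings is precisely a morphism of the form $\id{u}\tensor\sigma_{a,b}\tensor\id{t}$, where $a, b \in S$ are the two generators being swapped and $u, t$ are the unchanged prefix and suffix --- that is, a basic block. Since the outermost source and target of the composite are $s$ and $s'$, this proves the statement.

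If one prefers to avoid any appeal to isotopy, the same conclusion follows by induction on the number of inversions of $\pi$. If $\pi$ is the identity, $\sigma = \id{s}$ is the empty composite. Otherwise pick an adjacent pair $(i,i+1)$ that $\pi$ inverts, put $a = s_i$, $b = s_{i+1}$, and set $\sigma_0 := \id{s_1\dots s_{i-1}}\tensor\sigma_{a,b}\tensor\id{s_{i+2}\dots s_n} : s \to s''$, a basic block whose induced permutation is the adjacent transposition $\tau_i$; then $\sigma = \sigma_0 ; \sigma_1$ for the unique symmetry $\sigma_1 : s'' \to s'$, whose induced permutation is obtained from $\pi$ by composing with $\tau_i$ and hence has strictly fewer inversions. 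The induction hypothesis writes $\sigma_1$ as a composite of basic blocks, so the same holds for $\sigma$.

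The one point I would be careful about --- and the real content being borrowed from coherence --- is exactly \emph{which} coherence statement is in play: it is not that all parallel morphisms of $\Sym{S}$ are equal (they are not, as the two morphisms $s_1 s_1 \to s_1 s_1$ displayed just after Proposition~\ref{prop: different objects unique symmetry} show), but rather that morphisms of $\Sym{S}$ are faithfully classified by their induced permutations, so that for fixed source and target "having induced permutation $\rho$" pins down a morphism uniquely. This is what legitimises the equality $\sigma = \sigma_0 ; \sigma_1$ in the inductive step, and the identification of each horizontal slice with a basic block in the geometric argument; everything else is the textbook generation of $S_n$ by adjacent transpositions (via a bubble sort of $\pi$, or the inversion count above).
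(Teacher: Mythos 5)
Your proof is correct. The paper states this as a \emph{Fact} and, as with its other Fact environments, supplies no proof, so there is nothing to diverge from; your argument --- coherence for symmetric monoidal categories classifying morphisms of $\Sym{S}$ by their induced permutations, plus generation of the symmetric group by adjacent transpositions via a bubble sort (equivalently, induction on inversions) --- is exactly the standard justification the paper is implicitly relying on, and you correctly flag the one delicate point, namely that the relevant coherence statement is classification by permutations rather than equality of all parallel symmetries.
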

\begin{definition}\label{def: lifting strings}
	Let $S$ be a set. Given a string $s \in \Strings{S}$, denote with $s^l$ the string obtained as follows:
	\begin{itemize}
		\item Starting from the left, consider the $i$-th entry of $s$, call it $s_i$. It will be equal to some generator $t$. 
		\item If we have $s_i \neq s_j$ for all $j < i$, then set $s_i$ equal to the formal symbol $t^0$;
		\item Otherwise, there are other entries $j$ with $j < i$ and both $s_i = t = s_j$. Consider the largest of such $j$. If $t$ has been replaced with the formal symbol $t^k$ at entry $j$, replace it with $t^{k+1}$ at entry $i$.
	\end{itemize}
\end{definition}
\begin{example}
	Consider the string $s = aababbccba$ on the set $\{a,b,c\}$. Applying Definition ~\ref{def: lifting strings}, we get $s^l =  a^1 a^2 b^1 a^3 b^2 b^3 c^1 c^2 b^4 a^4$. 
\end{example}
$s^l$ is just a rewriting of $s$ in which all equal entries have been distinguished by enumeration. It follows trivially that $s^l$ is a string itself:
\begin{proposition}
	If $s \in \Strings{S}$, then $s^l \in \Strings{{S^l}}$, where
	\begin{equation*}
		S^l := \suchthat{s^i}{s \in S \wedge i \in \naturals}
	\end{equation*}
	Moreover, there is an obvious functor $\Sym{S^l} \to \Sym{S}$ which collapses $s^i$ to $s$ for all $i \in \naturals$.
\end{proposition}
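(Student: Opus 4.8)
The plan is to treat the two assertions of the proposition in turn; neither requires more than bookkeeping once set up correctly. For the first, I would read off Definition~\ref{def: lifting strings} directly: the $i$-th entry of $s^l$ is, by construction, a formal symbol of the form $t^k$, where $t \in S$ is the $i$-th entry of $s$ and $k \in \naturals$ is the occurrence counter produced by the recursion. Each such $t^k$ is by definition an element of $S^l = \suchthat{t^i}{t \in S \wedge i \in \naturals}$, and since $s^l$ has the same finite length as $s$, this already shows $s^l \in \Strings{S^l}$.

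For the second assertion, I would obtain the functor as the image of a function of sets under the assignment $S \mapsto \Sym{S}$. Let $\pi : S^l \to S$ be the map $t^k \mapsto t$; it is well-defined and total because every element of $S^l$ has a unique presentation $t^k$ with $t \in S$ and $k \in \naturals$, so that $S^l$ is just a copy of $S \times \naturals$. By the Fact that every function of sets induces a monoid homomorphism of the corresponding free monoids, $\pi$ yields $\Strings{\pi} : \Strings{S^l} \to \Strings{S}$, which I would use as the action on objects. On morphisms I would set $F(\id{s}) = \id{\Strings{\pi}(s)}$ and $F(\sigma_{s,t}) = \sigma_{\Strings{\pi}(s),\Strings{\pi}(t)}$, extending to all of $\Sym{S^l}$ by requiring $F$ to preserve $\tensor$ and $;$. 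By construction this sends the generating object $t^i$ to $t$, as required, and is strict symmetric monoidal.

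The one point that genuinely needs checking — and the only candidate for an ``obstacle'' in an otherwise routine argument — is that $F$ is well-defined on morphisms, i.e.\ that it respects the quotient by the equations of Definition~\ref{def: free smc}. This is straightforward: applying the clauses above to both sides of any of those equations produces exactly the corresponding equation in $\Sym{S}$ with every object label transported along $\Strings{\pi}$, and $\Sym{S}$ satisfies all of them since it too is built as in Definition~\ref{def: free smc}. If one prefers to avoid even this verification, the cleanest route is to observe that $\Sym{-}$ is a functor from $\textbf{Set}$ to strict symmetric monoidal categories (each $\Sym{S}$ being free on the set $S$ with no morphism generators) and simply take $F := \Sym{\pi}$; functoriality, strictness, and symmetric monoidality are then immediate, and the statement follows.
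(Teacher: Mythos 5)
Your argument is correct. The paper offers no proof of this proposition at all (it is introduced with ``It follows trivially that $s^l$ is a string itself''), and what you have written is precisely the routine verification being elided: the entries of $s^l$ are by construction formal symbols $t^k \in S^l$, and the collapsing functor is $\Sym{\pi}$ for the projection $\pi : S^l \to S$, $t^k \mapsto t$, obtained by functoriality of the free construction (equivalently, by checking that the generator assignment respects the defining equations, all of which hold verbatim in $\Sym{S}$).
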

We now extend the procedure we defined for strings to symmetries between them, as follows:
\begin{definition}\label{def: lifting symmetries}
	Let $\sigma_1;\dots;\sigma_n: s \to s'$ be a composition of basic blocks in $\Sym{S}$. Denote by $\sigma_1^l;\dots;\sigma_n^l$ the symmetry obtained by using the following procedure:
	\begin{itemize}
		\item Suppose $\sigma_1 := \id{u} \tensor \sigma_{s_1,s_2} \tensor \id{t}$. Define $\sigma_1^l$ by replacing $s_1, s_2$ and every generating object in $u,t$ with their labelled symbols so that the source of $\sigma_1^l$ is $s^l$;
		\item Apply the previous point to $\sigma_{i+1}$ such that the source of $\sigma_{i+1}^l$ coincides with the target of $\sigma_i^l$;
		\item Repeat the step for every $i$.
	\end{itemize}
\end{definition}
\begin{example}
	Consider $\sigma_1 = \id{a} \tensor \sigma_{a,a} \tensor \id{b \tensor b}$ and $\sigma_2 = \id{a \tensor a \tensor a} \tensor \sigma_{b,b}$. Then $\sigma^l_1;\sigma^l_2$ is given by composing $\sigma^l_1 = \id{a^1} \tensor \sigma_{a^2, a^3} \tensor \id{b^1 \tensor b^2}$ and $\sigma^l_2 = \id{a^1 \tensor a^3 \tensor a^2} \tensor \sigma_{b^1, b^2}$.
\end{example}
Note how Definitions~\ref{def: lifting strings} and~\ref{def: lifting symmetries} are well-defined since every object in a free symmetric monoidal category can be decomposed in a unique way.
\begin{definition}
	A symmetry $\sigma: s \to s'$ is called \emph{swap-free} if, for \emph{any} decomposition of $\sigma$ into basic blocks $\sigma = \sigma_1; \dots; \sigma_n$ and fixed labelings of generators $t^j, t^k$, there is a bijective correspondence between the sets:
	\begin{equation*}
		\suchthat{i}{\sigma_i^l := \id{u} \tensor \sigma_{t^j,t^k} \tensor \id{t}} \simeq \suchthat{i}{\sigma_i^l := \id{u'} \tensor \sigma_{t^k,t^j} \tensor \id{t'}}
	\end{equation*}
\end{definition}
Intuitively, a symmetry is swap-free if crossing wires with the same label can always be disentangled to identities. Indeed, it is easy to check that symmetries between objects which are products of distinct generators are always swap-free. Moreover, swap-free symmetries are well-behaved:
\begin{proposition}\label{prop: swap-free compose}
	Composition of swap-free symmetries is swap-free.
\end{proposition}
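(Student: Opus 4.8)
The plan is to first give a decomposition-independent reformulation of swap-freeness, after which closure under composition becomes essentially a bookkeeping remark. The reformulation I would aim for is: a symmetry $\sigma\colon s\to s'$ is swap-free if and only if, after labelling the source canonically as $s^l$ (Definition~\ref{def: lifting strings}), the target of the lifted symmetry $\sigma^l$ (Definition~\ref{def: lifting symmetries}) is exactly $(s')^l$; equivalently, for every generator $t$, the bijection that $\sigma$ induces between the occurrences of $t$ in $s$ and the occurrences of $t$ in $s'$ is order-preserving, sending the $k$-th occurrence from the left to the $k$-th occurrence from the left.

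To prove this reformulation I would fix a generator $t$ and two labelled copies $t^j,t^k$, and follow the two wires $w_j,w_k$ carrying these labels through an arbitrary decomposition $\sigma=\sigma_1;\dots;\sigma_n$ into basic blocks. Each $\sigma_i^l$ transposes two adjacent wires of $\sigma^l$, so it alters the relative position of $w_j$ and $w_k$ exactly when those are the two transposed wires, and in that case it is counted in the first set of the swap-free condition if $w_j$ is then the left of the two, and in the second set otherwise. Since a transposition of $w_j,w_k$ flips their relative order, the contributions to the two sets strictly alternate; hence the two cardinalities differ by $0$ when $w_j,w_k$ end in the same relative order in which they began, and by $\pm1$ otherwise. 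By the coherence theorem for symmetric monoidal categories~\cite{Selinger2010} (the same tool behind Proposition~\ref{prop: different objects unique symmetry}, applied here inside $\Sym{S^l}$), whether $w_j$ and $w_k$ keep their relative order is a property of $\sigma$ alone, not of the chosen decomposition. Therefore imposing the bijective correspondence for \emph{all} decompositions is the same as imposing it for one, and it holds precisely when all same-generator wires keep their order, i.e.\ when the target of $\sigma^l$ is $(s')^l$.

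Granting the reformulation, the proposition is short. Let $\sigma\colon s\to s'$ and $\tau\colon s'\to s''$ be swap-free. Lifting $\sigma$ with canonical source $s^l$, swap-freeness gives target $(s')^l$, which is exactly the canonical source used to lift $\tau$; so concatenating a block decomposition of $\sigma$ with one of $\tau$ yields a block decomposition of $\sigma;\tau$ whose lift is $\sigma^l;\tau^l$ and whose target is the target of $\tau^l$, namely $(s'')^l$ by swap-freeness of $\tau$. By the reformulation, $\sigma;\tau$ is swap-free. Equivalently, the permutation that $\sigma;\tau$ induces on the occurrences of each generator is the composite of the two induced permutations, each of which is the identity.

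The main obstacle is the reformulation, specifically the coherence bookkeeping: one must be careful that the raw number of crossings of $w_j$ over $w_k$ is not an invariant of $\sigma$ — only its signed/parity version is — and that $\sigma$ restricted to a single generator class determines an honest permutation, so that ``keeping relative order'' is well defined and independent of how $\sigma$ is written as a composite of basic blocks. Once that invariance is nailed down, the remaining steps, including the composition argument above, are purely formal.
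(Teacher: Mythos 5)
Your proof is correct, but it takes a genuinely different route from the paper's. The paper argues directly: it takes the concatenation $\sigma_1;\dots;\sigma_n;\tau_1;\dots;\tau_m$ of block decompositions of $\sigma$ and $\tau$ as a decomposition of $\sigma;\tau$, observes (after reindexing the labels in the $\tau$ part) that each of the two index sets in the swap-free condition for $\sigma;\tau$ is the disjoint union of the corresponding sets for $\sigma$ and for $\tau$, and concludes from the hypothesis that the summands are pairwise bijective. You instead first prove an invariant characterization --- $\sigma$ is swap-free iff the permutation it induces on the occurrences of each generator is order-preserving, equivalently the target of $\sigma^l$ is $(s')^l$ --- via the alternation argument (successive crossings of the two tracked wires must alternate between the $\sigma_{t^j,t^k}$ and $\sigma_{t^k,t^j}$ types, so the two counts agree iff the relative order is restored) together with coherence to make the induced permutation decomposition-independent; composition then reduces to composing identity permutations. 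Your route is longer but buys something real: the definition quantifies over \emph{all} decompositions of $\sigma;\tau$ into basic blocks, and the paper's argument only exhibits the bijection for the concatenated decomposition, implicitly relying on exactly the decomposition-independence that you prove explicitly. The price is that your reformulation does the bulk of the work and needs the careful coherence bookkeeping you flag (only the parity of crossings of a fixed pair of wires, not their number, is an invariant); the paper's price is brevity at the cost of leaving that invariance tacit. Both are acceptable; yours is the more self-contained of the two.
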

\begin{proof}
	Consider $\sigma, \tau$ composable and swap-free. Then, let $\sigma_1;\dots;\sigma_n;\tau_1;\dots;\tau_m$ be a decomposition of $\sigma;\tau$ into basic blocks. Consider $\sigma^l_1;\dots;\sigma^l_n;\tau^l_1;\dots;\tau^l_m$, and fix two labeled generators $t^j, t^l$. Then we have, applying some tedious but straightforward index rewriting to the $\tau$s:
	\begin{align*}
		&\suchthat{i}{\sigma_i^l := \id{u} \tensor \sigma_{t^j,t^k} \tensor \id{t}} 
		\sqcup 
		\suchthat{i}{\tau_i^l := \id{u} \tensor \tau_{t^j,t^k} \tensor \id{t}} 
		\simeq\\ 
		&\qquad\qquad \simeq \suchthat{i}{\sigma_i^l := \id{u'} \tensor \sigma_{t^k,t^j} \tensor \id{t'}} 
		\sqcup
		\suchthat{i}{\tau_i^l := \id{u} \tensor \tau_{t^j,t^k} \tensor \id{t}}\\
	\end{align*}
	Where the bijection follows because the sets are by hypothesis pairwise bijective. 
\end{proof}
On the contrary, composition of non-swap-free symmetries is not preserved, as we can see in the following example:
\begin{example}\label{ex: non-swap-free do not compose}
	Consider the symmetry $\sigma_{a,a}$. It is clearly not swap-free, but it can be composed with itself, giving $\sigma_{a,a};\sigma_{a,a} = \id{a \tensor a}$ which is swap-free.
\end{example}
\subsection{Goodbye, functoriality!}
The relevance of swap-free symmetries relies on the fact that ``counting how many times wires cross each other'' is the only property that, modulo coherence conditions for SMCs, allows us to distinguish different symmetries between objects. 

In fact, if a FSSMC $\CategoryC$ has objects generated by a set $S$, then we can faithfully map $\Sym{S}$ to $\CategoryC$, and all the considerations we made on symmetries of $\Sym{S}$ can be transported to $\CategoryC$. By generalizing Proposition~\ref{prop: different objects unique symmetry}, given two objects there is at most one symmetry between them once we decide how -- if at all -- wires labeled in the same way have to cross. In particular, there is at most one swap-free symmetry between any two objects.

Proposition~\ref{prop: swap-free compose} and Example~\ref{ex: non-swap-free do not compose} determine how swap-freeness is the only type of wire-crossing which is invariant under composition, and hence the only viable choice for $\bar{\sigma}$ and $\bar{\sigma}'$ in Definition~\ref{def: free functor on morphisms}. Unfortunately, this is when our valiant chase after adjunctions collides with reality: Our choice is not functorial! In fact:
\begin{theorem}\label{prop: swap-freeness is not functorial}
	A transition-preserving functor preserves swap-free symmetries iff it is injective on objects.
\end{theorem}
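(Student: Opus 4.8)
The plan is to establish the two implications separately, in each case reducing the behaviour of $F$ on an arbitrary swap-free symmetry to its behaviour on the elementary symmetries $\sigma_{x,y}$ between object generators. Before that I would record the geometric reading of swap-freeness that makes everything transparent: combining the coherence theorem for string diagrams with Proposition~\ref{prop: different objects unique symmetry}, a symmetry $\sigma\colon s\to s'$ is swap-free exactly when its wiring can be rearranged, rel endpoints, so that no two wires carrying the \emph{same} generator cross; reading off a basic-block decomposition from such a picture shows that a swap-free $\sigma$ always decomposes as $\sigma_1;\dots;\sigma_n$ with each $\sigma_i$ of the form $\id{u}\tensor\sigma_{x,y}\tensor\id{v}$ for \emph{distinct} generators $x\neq y$ (and, conversely, $\sigma_{x,y}$ with $x\neq y$ is swap-free). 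In particular there is at most one swap-free symmetry between any two objects.

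For ``injective on objects $\Rightarrow$ preserves swap-free symmetries'': the working content of the hypothesis is that, on generators, $F$ behaves as an injective relabelling — which for the functors at issue, and in particular for every grounded one (where a generator is sent to a single generator), amounts to saying that the strings $F(x)$, as $x$ ranges over the generators of $\CategoryC$, are nonempty with pairwise disjoint supports. Granting this, take a swap-free $\sigma$, decompose it by the lemma above as $\sigma_1;\dots;\sigma_n$ with each $\sigma_i=\id{u}\tensor\sigma_{x_i,y_i}\tensor\id{v}$ and $x_i\neq y_i$. Then $F\sigma_i=\id{F u}\tensor\sigma_{F(x_i),F(y_i)}\tensor\id{F v}$, and since $F(x_i)$ and $F(y_i)$ have disjoint supports every crossing of $\sigma_{F(x_i),F(y_i)}$ is between differently-labelled wires, so $F\sigma_i$ is swap-free. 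Hence $F\sigma=F\sigma_1;\dots;F\sigma_n$ is a composition of swap-free symmetries and therefore swap-free by Proposition~\ref{prop: swap-free compose}.

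For the converse I would argue by contraposition. If $F$ is not injective on objects then, exploiting again the shape of these functors, two \emph{distinct} generators of $\CategoryC$ are already identified by $F$ — say $F(x)=F(y)$ with $x\neq y$ — or, more generally, the images of two distinct generators share a letter. In the first case $\sigma_{x,y}\colon x\tensor y\to y\tensor x$ is swap-free (as $x\neq y$), whereas $F\sigma_{x,y}=\sigma_{F(x),F(x)}$ is the genuine transposition of two copies of one and the same generator, which is \emph{not} swap-free — precisely the phenomenon isolated in Example~\ref{ex: non-swap-free do not compose}; in the overlapping case one extracts the same failure from a single shared letter. So $F$ fails to preserve swap-freeness, as wanted. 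This also makes explicit the obstruction to Definition~\ref{def: free functor on morphisms} being functorial: Proposition~\ref{prop: swap-free compose} and Example~\ref{ex: non-swap-free do not compose} leave the swap-free symmetries as the only composition-stable choice of $\bar\sigma,\bar\sigma'$, and we have just shown that choice is respected only by the (very restrictive) functors that do not collapse distinct places.

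The step I expect to be the real obstacle is the one invoked on both sides: turning ``$F$ is injective on objects'' into a usable statement about the \emph{generator}-level data of $F$, i.e.\ controlling functors that split a generator into a longer string or erase it. For grounded functors this is immediate, but in general one must check carefully that injectivity of the induced monoid homomorphism on strings is genuinely equivalent, for the class of functors under consideration, to the images of distinct generators being nonempty with disjoint supports, rather than merely distinct; getting this bookkeeping exactly right, and not conflating the two conditions, is where the argument needs the most care.
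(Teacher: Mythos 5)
Your proof is correct and follows essentially the same route as the paper's: your ``only if'' direction is the paper's contrapositive argument verbatim ($\sigma_{s_1,s_2}$ is swap-free while its image $\sigma_{Fs_1,Fs_1}$ is not), and your ``if'' direction differs only in packaging --- you decompose a swap-free symmetry into basic blocks on \emph{distinct} generators and close up using Proposition~\ref{prop: swap-free compose}, where the paper instead transports the defining index-set bijections along the relabelling $F(t^j)=(Ft)^j$. The caveat in your final paragraph is not a defect of your write-up but a genuine gap shared by the paper's own proof: both arguments really use that distinct generators have images that are nonempty with pairwise disjoint supports (the paper's step $F(t^j)=(Ft)^j$ tacitly assumes generators go to single, distinct generators), and this is strictly stronger than injectivity of the monoid homomorphism on objects. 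For instance, $F(x)=a\tensor b$, $F(y)=b\tensor a$ is injective on objects (the set $\{ab,\,ba\}$ is a code in the free monoid), yet $F\sigma_{x,y}=\sigma_{a\tensor b,\,b\tensor a}$ crosses the two $a$-wires (and the two $b$-wires) exactly once each, so it is not swap-free; in the other direction, a functor sending every generator to $\tensorUnit$ preserves swap-freeness vacuously without being injective. So the theorem is true as intended --- for grounded functors, or more generally when distinct generators have nonempty images with disjoint supports --- and your refusal to conflate that condition with injectivity is exactly the right instinct; in a final write-up I would state the disjoint-support hypothesis explicitly rather than ``injective on objects''.
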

\begin{proof}
	We prove the counternominal, which is easier. Suppose $F: \CategoryC \to \CategoryD$ is not injective on objects. Then there are generating objects $s_1 \neq s_2$ of $\CategoryC$ such that $Fs_1 = Fs_2$. Hence $\sigma_{s_1,s_2}$ is swap-free but $F\sigma_{s_1 s_2} = \sigma_{Fs_1, Fs_2} = \sigma_{Fs_1, Fs_1}$ is not, and $F$ doesn't preserve swap-free symmetries.
	
	Conversely, suppose that $F$ is injective on objects. Then if $\sigma$ is swap-free and applying Definition~\ref{def: lifting strings} we get, for any decomposition of $\sigma$ into basic blocks and each couple of labelled generators $t^j, t^k$, an isomorphism
	\begin{equation*}
		\suchthat{i}{\sigma_i^l := \id{u} \tensor \sigma_{t^j,t^k} \tensor \id{t}} \simeq \suchthat{i}{\sigma_i^l := \id{u'} \tensor \sigma_{t^k,t^j} \tensor \id{t'}}
	\end{equation*}
	Since $F$ is injective, all the objects in $\CategoryC$ are mapped to different objects in $\CategoryD$, and labelings are preserved just by rewriting generators, so we get $F(t^j) = (Ft)^j$, $F(t^k) = (Ft)^k$. From this we can extend the isomorphism above to 
	\begin{equation*}
		\suchthat{i}{F\sigma_i^l := \id{Fu} \tensor \sigma_{(Ft)^j,(Ft)^k} \tensor \id{Ft}} \simeq \suchthat{i}{\sigma_i^l := \id{Fu'} \tensor \sigma_{(Ft)^k,(Ft)^j} \tensor \id{Ft'}}
	\end{equation*}
	This is sufficient to prove swap-freeness of $F\sigma$, since all generating objects in $F\sigma$ are the image of generating objects of $\sigma$ through $F$. 
\end{proof}
\begin{example}\label{ex: functoriality not working}
	Suppose we require $\bar{\sigma}$ and $\bar{\sigma}'$ to be swap-free in Definition~\ref{def: free functor on morphisms}. Consider net morphisms $N \xrightarrow{f} M \xrightarrow{g} L$. By definition, if $f(t_N) = t_M$, then $\OFoldSym f$ will send the generator $t_N$ to $\sigma;t_M;\sigma'$, with $\sigma, \sigma'$ swap-free symmetries in $\OFoldSym M$. Again by definition, if $g(t_M) = t_L$, $\OFoldSym g$ will send $t_M$ to $\tau; t_L; \tau'$, with $\tau, \tau'$ swap-free in $\OFoldSym L$. Then the composition $\OFoldSym f ; \OFoldSym g$ will send $t_N$ to:
	\begin{equation*}
		(\OFoldSym g) \sigma; (\OFoldSym g) t_M; (\OFoldSym g) \sigma' = (\OFoldSym g) \sigma; \tau; t_L; \tau' ; (\OFoldSym g) \sigma'
	\end{equation*}
	If $g$ is not injective on places, $(\OFoldSym g) \sigma$, $(\OFoldSym g) \sigma'$ in are not in general swap-free. Hence, we cannot conclude that $(\OFoldSym g) \sigma; \tau$ and $\tau';(\OFoldSym g) \sigma'$ are swap-free. On the other hand, $\OFoldSym(f;g)$ sends $t_N$ to $\rho;t_L;\rho'$ with $\rho, \rho'$ swap-free in $\OFoldSym L$, proving $\OFoldSym(f;g) \neq \OFoldSym f ; \OFoldSym g$. 
\end{example}
The significance of Proposition~\ref{prop: swap-freeness is not functorial}, in light of Example~\ref{ex: functoriality not working}, is that if we want our requirements in Definition~\ref{def: requirements} to be satisfied we have to give up functoriality: A functor between SSMCs is injective on objects only if its corresponding morphism between nets is, and restricting to injective net morphisms is unacceptable for our requirements. Hence, we conclude that there is no way to isolate ``nice'' ways to linearize multisets homomorphisms just from the topological properties of symmetries, at least not without violating Requirement~\ref{item: morphisms are automatically lifted} in our list.

Note also that we cannot hope for a non-invasive quick fix as we did in Definition~\ref{def: free functor on objects}, since in linearizing a multiset homomorphism we are forced to consider the relationship between its source and target: The only way to lift a net morphism to a functor between FSSMCs is to impose additional structure on the net \emph{which the morphisms are required to respect}. This is akin to the strategy adopted for pre-nets, which we have already deemed unsuitable for our purposes.
\subsection{A word on higher-categorical approaches}
One may be tempted to investigate a higher-categorical correspondence between nets and FSSMCs, replacing functors with pseudofunctors, and chasing a weak 2-adjunction instead. This approach does not work either. First, notice how natural transformations, which are the standard way to define 2-cells in our situation, do not really help: A natural transformation between functors $F,G: \CategoryC \to \CategoryD$ acts by selecting, for each object of $\CategoryC$, a morphism of $\CategoryD$. On the contrary, a transition-preserving functor acts by selecting a couple of symmetries in $\CategoryD$ for \emph{each} morphism generator in $\CategoryC$. This is to say that the idea of defining a natural transformation whose components are all symmetries between $\OFoldSym(f;g)$ and $\OFoldSym f ; \OFoldSym g$ won't work, since differences between the two are too granular to satisfy the required naturality conditions.
 
Another approach may be replacing natural transformations with endofunctors sending generators to permutations of themselves, and use such endofunctors to define 2-cells. This is the strategy that the authors pursued in trying both to preserve the adjunction and to have their requirements satisfied, before realizing this was not possible. Indeed, careful investigation of the categorical structure shows how, for these 2-cells to be effective in defining a pseudofunctorial correspondence between nets and FSSMCs, one needs to restrict to transition-preserving functors which are faithful, again violating Requirement~\ref{item: morphisms are automatically lifted}.

As a final note, the authors want to stress that implementing a 1-categorical version of the correspondence between nets and FSSMCs is already pushing the Idris compiler to its limits~\cite{Perone2019}. Hence, even supposing that it exists, the absence of tactics and other refined theorem-proving tools in all production-ready programming languages would indeed make implementing a 2-categorical version of our problem very difficult. Knowing this, the authors kept their investigations in the realm of higher category theory as limited as possible.
\section{A different approach}
Although the last section may seem to cast a somewhat pessimistic light on the question of how to relate nets and FSSMCs in a implementation-friendly way, all is not lost. In fact, we have the following result:
\begin{proposition}
	There is a functor from $\FSSMC$ to $\Petri$, which can be restricted to a functor from $\GFSSMC$ to $\GPetri$.
\end{proposition}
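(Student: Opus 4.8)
The plan is to promote the object assignment $\UnFoldSym$ of Definition~\ref{def: forgetful functor on objects} to a functor $\FSSMC \to \Petri$. On objects it already sends a FSSMC $\CategoryC$ to a bona fide net $\UnFoldSym\CategoryC$ whose places are the object generators of $\CategoryC$ and whose transitions are its morphism generators, so the work is to define the action on a morphism $F: \CategoryC \to \CategoryD$ of $\FSSMC$ — which, after Proposition~\ref{prop: transition-preserving subcategory}, is a transition-preserving strict symmetric monoidal functor — and then to verify the functor laws.

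I would set $\UnFoldSym F = (\PlM{(\UnFoldSym F)}, \TrM{(\UnFoldSym F)})$ as follows. For places, let $\PlM{(\UnFoldSym F)}$ be the unique multiset homomorphism sending each object generator $s$ of $\CategoryC$ to $\Multiplicity{}(Fs)$; this is well-posed because $\Msets{\Pl{\UnFoldSym\CategoryC}}$ is the free commutative monoid on the object generators of $\CategoryC$ and $\Multiplicity{}(Fs)$ lies in $\Msets{\Pl{\UnFoldSym\CategoryD}}$. For transitions, given a morphism generator $t$ of $\CategoryC$, transition-preservation supplies a factorization $Ft = \sigma;t';\sigma'$ with $t'$ a morphism generator of $\CategoryD$ and $\sigma,\sigma'$ symmetries, and I set $\TrM{(\UnFoldSym F)}(t) := t'$. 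The one point that is not pure bookkeeping — and the step I expect to be the main obstacle — is that this is well-defined, i.e. that $t'$ is determined by $Ft$ alone. I would prove it by checking that every equation of Definition~\ref{def: free smc} preserves the multiset of morphism generators occurring in an expression (the unit and symmetry axioms involve only identities and symmetries, and the remaining axioms carry matching generators on both sides), so that this multiset is a well-defined invariant of morphisms of the free category $\CategoryD$; for $Ft = \sigma;t';\sigma'$ this invariant is the singleton $\{t'\}$, which pins down $t'$.

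Next I would check that $\UnFoldSym F$ is a morphism of nets, i.e. that the equations $\Pin{-}{\UnFoldSym\CategoryC};\PlM{(\UnFoldSym F)} = \TrM{(\UnFoldSym F)};\Pin{-}{\UnFoldSym\CategoryD}$ and its output analogue hold. For a generator $t: A \to B$ of $\CategoryC$ with $A = s_1\dots s_n$, strictness of $F$ gives $\PlM{(\UnFoldSym F)}(\Pin{t}{\UnFoldSym\CategoryC}) = \PlM{(\UnFoldSym F)}(\Multiplicity{}(A)) = \sum_i \Multiplicity{}(Fs_i) = \Multiplicity{}(Fs_1\dots Fs_n) = \Multiplicity{}(FA)$; and writing $\TrM{(\UnFoldSym F)}(t) = t'$ with $t': A'\to B'$, the fact that $\sigma: FA \to A'$ is a symmetry yields $\Pin{t'}{\UnFoldSym\CategoryD} = \Multiplicity{}(A') = \Multiplicity{}(FA)$, so the two sides agree; the output square is obtained by replacing $A$ with $B$. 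Functoriality is then routine: the identity functor fixes all generators, so $\UnFoldSym(\id{\CategoryC}) = \id{\UnFoldSym\CategoryC}$; for a composite $F;G$, on places both $\PlM{(\UnFoldSym(F;G))}$ and $\PlM{(\UnFoldSym F)};\PlM{(\UnFoldSym G)}$ send a generator $s$ to $\Multiplicity{}(GFs)$ (expand $Fs$ into generators and use strictness of $G$), while on transitions, from $Ft = \sigma;t';\sigma'$, $Gt' = \tau;t'';\tau'$, and strictness of $G$ one gets $G(Ft) = (G\sigma;\tau);t'';(\tau';G\sigma')$ with $G\sigma;\tau$ and $\tau';G\sigma'$ symmetries, so the uniqueness lemma gives $\TrM{(\UnFoldSym(F;G))}(t) = t'' = \TrM{(\UnFoldSym G)}(\TrM{(\UnFoldSym F)}(t))$.

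Finally, for the restriction to grounded data: if $F$ is grounded, its object part is $\Strings{g}$ for a function $g$ between object generators, so $Fs = g(s)$ is a single generator for every object generator $s$; hence $\PlM{(\UnFoldSym F)}(s) = \Multiplicity{}(g(s))$ is the singleton multiset on $g(s)$, which means $\PlM{(\UnFoldSym F)} = \Mset{g}$ is grounded and $\UnFoldSym F$ is a grounded morphism of nets. Thus $\UnFoldSym$ restricts to a functor $\GFSSMC \to \GPetri$, which establishes the proposition.
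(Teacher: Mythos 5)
Your construction is exactly the paper's: $\UnFoldSym$ acts on places by $s \mapsto \Multiplicity{}(Fs)$ and on transitions by extracting the generator $t'$ from the transition-preserving factorization $Ft = \sigma;t';\sigma'$, with the grounded case handled the same way. The only difference is that you spell out what the paper dismisses as ``a straightforward check'' --- in particular the well-definedness of $t'$ via the generator-multiset invariant --- and those details are correct.
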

\begin{proof}
	We send each FSSMC $\CategoryC$ to $\UnFoldSym \CategoryC$. If $F: \CategoryC \to \CategoryD$ is a transition-preserving functor sending $t_\CategoryC$ to $\sigma;t_\CategoryD;\sigma'$, we define $\UnFoldSym F: \UnFoldSym \CategoryC \to \UnFoldSym \CategoryD$ by sending the transition $t_\CategoryC$ to the transition $t_\CategoryD$. On places, we just set $\UnFoldSym F p = \Multiplicity{} F p$. Functoriality is a straightforward check.
	
	Restricting to grounded FSSMCs, if $F:\CategoryC \to \CategoryD$ is grounded then for each generating object $s$ in $\CategoryC$ $F s$ is a generating object in $\CategoryD$. By definition, so is $\Multiplicity{} F s$, straightforwardly proving the claim. 
\end{proof}
As expected, $F$ acts by forgetting information, effectively delinearizing mappings of strings to multiset homomorphisms. 
\begin{figure}[!h]
	\centering
		\scalebox{0.6}{
		\begin{tikzpicture}
			\node (c) {$\xleftarrow{\UnFoldSym}$};
			\node (1a) [above left = 1cm and 1cm of c]{$N$};
			\node (2a) [above right = 1cm and 1cm of c] {$\CategoryC$};
			\node (1b) [below = 2.7cm of 1a] {$M$};
			\node (2b) [below = 2.7cm of 2a] {$\CategoryD$};
			
			\draw[->, bend left] (1a) to node[midway, above] {$\OFoldSym$}  (2a);
			\draw[->, bend left] (1b) to node[midway, above] {$\OFoldSym$}  (2b);
			\draw[<-, bend right] (1a) to node[midway, above] {$\UnFoldSym$}  (2a);
			\draw[<-, bend right] (1b) to node[midway, above] {$\UnFoldSym$}  (2b);		
			
			\draw[->] (1a) to node[midway, left] {$f$}  (1b);
			\draw[->] (2a) to node[midway, right] {$F$}  (2b);	
		\end{tikzpicture}
		}
	\caption{The mappings between nets and FSSMCs.}\label{fig: Petri-FSSMC mapping}
\end{figure}
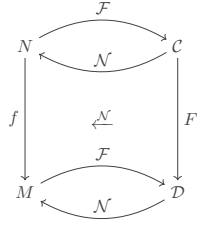

\noindent
Putting everything together, the situation is as in Figure~\ref{fig: Petri-FSSMC mapping}. There is a way to go \emph{back and forth} from nets to FSSMCs, and a way to functorially map FSSMC morphisms to net morphisms. This suggests the following course of action: In our implementation, we \emph{always} manipulate FSSMCs. The user can nevertheless draw Petri nets, which are automatically mapped to their corresponding FSSMC through $\OFoldSym$. When the user wants to fetch information regarding some Petri net $N$, the corresponding FSSMC $\OFoldSym N$ is retrieved instead, and $\UnFoldSym \OFoldSym N$ is displayed. 

Moreover, the user can specify a morphism between nets $f: N \to M$ by selecting how places and transitions are to be mapped. By applying Proposition~\ref{def: free functor on morphisms}, we can lift this information (\emph{non-functorially!}) to a functor $\OFoldSym N \to \OFoldSym M$ which maps $t_\CategoryC$ to $\sigma;t_\CategoryD;\sigma'$ where $\TrM{f}(t_\CategoryC) = t_\CategoryD$ and $\sigma, \sigma'$ are swap-free. 

Notice that a transition-preserving functor $F: \CategoryC \to \CategoryD$ can be fully specified by providing a mapping between objects, a mapping between the set of generators of $\CategoryC$ and $\CategoryD$ and, for each $t_\CategoryC$ generator of $\CategoryC$, a couple $(\sigma, \sigma')$ of suitable symmetries in $\CategoryD$. The strategy highlighted in the paragraph above provides a way to automatically infer suitable couples $(\sigma, \sigma')$ for each generator $t_\CategoryC$ by providing a net morphism. \emph{In our setting, this can be seen as analogous to type inference}. Having obtained this list, the user can then further tweak each couple to suit particular needs. All in all, the user \emph{never} has to specify a morphism of nets, and instead works with FSSMCs from the start. However, morphisms between nets \emph{can} be used to automatically provide a template from which a morphism between their corresponding FSSMCs is refined and specified.
\subsection{Implementation advantages}
Notice how the strategy summarized respects all requirements of Definition~\ref{def: requirements}: 
\begin{itemize}
	\item All the categorical structures involved are free. Moreover, we are able to employ the definition of Petri net and FSSMC  naively, with no additional structure curbing the expressiveness of both paradigms;
	\item Mapping FSSMCs to Petri nets is as computationally feasible as just implementing FSSMCs, since nearly all the operations between nets and FSSMCs amount to forget some information;
	\item FFSMCs represent net computations in a meaningful way: Each morphism in the FSSMCs can be mapped to a sequence of transition firings in the corresponding net, and for each sequence of transition firings there is at least a corresponding morphism in the FSSMC. Moreover, each FSSMCs can be functorially mapped to any other symmetric monoidal category, providing the functorial mapping to a semantics as we wanted;
	\item Our mapping is useful for intuition, since we are able to naively leverage the graphical calculus of Petri nets on one hand, and string diagrams to represent net executions on the other;
	\item Nets can be morphed into each other. Moreover, the user is able to tweak the morphism between corresponding net histories directly by specifying a transition-preserving functor between executions.
\end{itemize}
Another great advantage of using FSSMCs directly is that \emph{manipulating strings is way easier than manipuating multisets in a development environment}. This is because countless tools and data structures, such as lists, have been developed to deal with strings over the years, mainly to perform operations on text. To see the extent to which this is true, consider the method to parse the information defining a net from a string shown in Figure~\ref{fig: string to net}:
\begin{figure}[!h]
	\centering
	\includegraphics[height=100px]{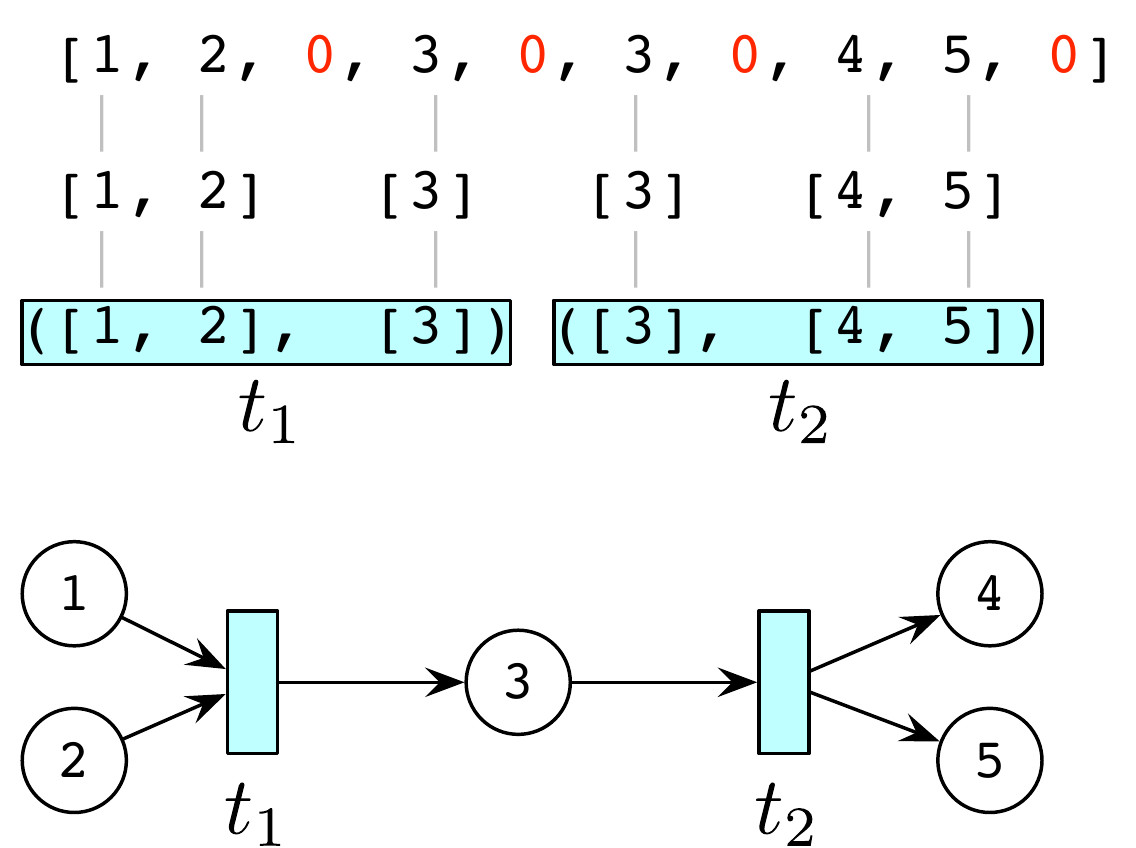}
	\caption{A way to convert a string to a Petri net, and vice-versa.}\label{fig: string to net}
\end{figure}

\noindent
We start with a list of natural numbers, where each number uniquely labels a place in the net. The number $0$ is special, and is used to split the list into sublists. Starting from the left, sublists are paired in couples, each couple defining input and output places of a transition. As the figure shows, this is enough information to construct a Petri net.

Such a procedure is fairly straightforward, but we might notice how such a list could also be used to define the symmetric monoidal category corresponding to the net. Lists are in fact naturally ordered, and each sublist in a pair can be used to define the source and target of a generating morphism: All in all, programming languages \emph{do prefer} working with FSSMCs, which is where many of the computational advantages of our strategy come from.
\subsection{Status of the implementation}
Having focused on implementation throughout this document, the reader may have begun to wonder whether such an implementation exists. Happily, this is indeed the case: Quite recently, we released \idrisct~\cite{StateboxTeamb}, an Idris library that provides formally verified definitions of categorical concepts such as category, functor, natural transformation etc. In particular, with \idrisct it is already possible to build symmetric monoidal categories specifying a list of generators in the very same format shown in Figure~\ref{fig: string to net}. 

The functorial mapping from $\FSSMC$ to $\Petri$ has been worked out on a separate repository, which will be made public soon. The user interface code for visualizing nets and their executions in the frontend is also being worked on. The most challenging part of that is interconnecting the Idris code developed in \idrisct with more flexible languages, such as Purescript~\cite{PureScript}, used in the frontend and middleware.

Another direction of development consists in functorially linking \idrisct with \texttt{typedefs}~\cite{StateboxTeama}, a language-agnostic type construction language based on polynomials. Such a functor will enable the mapping of net executions to computations, thus fully realising Requirement~\ref{item: mapping is faithful and full}.
\section{Conclusion and future work}
In this paper, we reviewed some of the approaches pursued in linking Petri nets with free strict symmetric monoidal categories, traditionally interpreted as their categories of computations. After fixing a list of requirements, we argued that chasing an adjunction between the two may not be the most fruitful strategy if one's goal is to implement such a correspondence in code. As an alternative, we showed how a functor from free strict symmetric monoidal categories to Petri nets is enough to obtain a conceptually meaningful implementation that fully satisfies our requirements.

We consider this important since such an implementation has immediate practical implications, in that it enables the direct use of Petri nets in software development.

Future work will be focused mainly on fully implementing the strategy highlighted in this paper. On the purely categorical side, we would like to investigate whether the results recently published in~\cite{Balco2019} have any impact in representing Petri net computations.

\printbibliography
\newpage
\appendix
\section{Appendix}\label{app: Appendix}
\subsection{Ordered Petri nets}
Here we show how we can endow Petri nets with some additional structure which allows to define a function $\Ordering{}$ to linearize multisets to strings. We will also show how this function ``does not get in the way'', meaning that it does not require to change the definition of morphism between nets. We start by observing that defining $\Ordering{}$ is sraightforward if our set of places is ordered:
%
%
%
\begin{definition}
Let $(S, <)$ be a well ordered set. Define $\Ordering{S}: \Strings{S} \to \Strings{S}$ as the function such that:
	\begin{itemize}
		\item $\Ordering{S}(s)$ is a permutation of $s$;
		\item Elements of $\Ordering{S}(s)$ are monotonically increasing: If $\Ordering{S}(s) = (s_0, \dots, s_n)$, then $s_i \leq s_{i+1}$ for each $0 \leq i < n$.
	\end{itemize}
	Abusing notation, we can define $\Ordering{S}: \Msets{S} \to \Strings{S}$ as the function mapping each multiset $s$ to the unique string $\Ordering{S}(s)$ such that $\Multiplicity{S}(\Ordering{S}(s)) = s$ and $\Ordering{S}(\Ordering{S}(s)) = \Ordering{S}(s)$.
\end{definition}
In our context, we required the set of places of a net to have no additional structure. It seems sensible then to give the following definition:
\begin{definition}\label{def: ordered Petri net}
	A \emph{ordered Petri net} is a couple $(N, <_N)$ where $N$ is a Petri net and $<_N$ is a well ordering on $\Pl{N}$.
\end{definition}
Since our main focus is on implementation, it should be noted how, given a Petri net, imposing a total ordering on its places is an easy task. For instance, in the following Idris code snippet, we define the type \texttt{OrdPetriNet} by just taking a previously defined \texttt{PetriNet} type, and by requiring that the type of places we use in it must be orderable:
\lstset{language=literateidris, style=numbers}
\begin{lstlisting}
 record OrdPetriNet where
   constructor MkOrdPetriNet
   pnet : PetriNet
   ordPlaces: Ord (places pnet)
\end{lstlisting}
Still, we need to define what a morphism of ordered Petri nets is. This is quite straightforward:
\begin{definition}
	A \emph{morphism of ordered Petri nets} $f:(N, <_N) \to (M, <_M)$ is just a morphism $N \to M$ between their underlying nets.
\end{definition}
We make no requirement whatsoever for the ordering to interact with the functions defining our morphisms. This is the main difference with the notion of pre-net defined in~\cite{Baldan2003}, where morphisms are required to respect the order in which inputs/outputs are attached to transitions. On the contrary, our definition does not violate Requirement~\ref{item: mapping is useful for intuition}. We can make this concept categorically precise by proving the following couple of propositions:
\begin{proposition}
	Ordered Petri nets and their morphisms form a category, denoted $\OPetri$.
\end{proposition}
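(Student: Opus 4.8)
The plan is to reduce everything to the already-established fact that $\Petri$ is a category. Since a morphism of ordered Petri nets $f \colon (N, <_N) \to (M, <_M)$ is \emph{by definition} nothing more than a morphism $f \colon N \to M$ of the underlying nets — the orderings $<_N, <_M$ playing no role in the data of $f$ — the categorical operations can simply be inherited. First I would define the composite of $f \colon (N,<_N) \to (M,<_M)$ and $g \colon (M,<_M) \to (L,<_L)$ to be the composite $f;g \colon N \to L$ taken in $\Petri$, and the identity on $(N,<_N)$ to be the identity net morphism $\id{N} \colon N \to N$. These are well-typed as morphisms of ordered nets precisely because being an ordered-net morphism imposes no constraint beyond being a net morphism.

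Next I would check the well-definedness conditions, which are immediate: the composite of two net morphisms is a net morphism (this is exactly the content of the earlier Fact that $\Petri$ is a category, proved by noting multiset homomorphisms compose and identities lift), and the identity net morphism is a net morphism. Then associativity of composition and the left/right unit laws for $\OPetri$ follow verbatim from the corresponding laws in $\Petri$, since both the morphisms and their composition in $\OPetri$ are literally those of $\Petri$. In other words, the assignment $(N,<_N) \mapsto N$ together with the identity on morphisms exhibits a faithful (indeed, essentially a ``forgetful'') functor $\OPetri \to \Petri$, and one can phrase the proof as: $\OPetri$ is the category whose objects are $\Petri$-objects equipped with a well ordering of places and whose hom-sets $\OPetri\big((N,<_N),(M,<_M)\big)$ are taken to be $\Petri(N,M)$, with composition and identities inherited; this is manifestly a category.

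I do not expect any genuine obstacle here. The only point worth stating explicitly — and the single place where one must resist overcomplicating the argument — is that, unlike for pre-nets, there is no compatibility condition between a morphism and the orderings to verify; hence no side conditions can fail to be preserved under composition. I would therefore keep the proof to a couple of sentences, remarking that the category structure on $\OPetri$ is simply that of $\Petri$ transported along the (bijective-on-morphisms) forgetful assignment, and that all axioms hold because they already hold in $\Petri$.
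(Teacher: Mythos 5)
Your proof is correct and takes essentially the same route as the paper: both observe that the ordering imposes no condition on morphisms, so identities and composition (and hence all axioms) are inherited directly from $\Petri$. The paper's version is just a terser statement of the same reduction.
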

\begin{proof}
	Trivial by noticing that, for each net $(N, <_N)$, $\id{N} := (\id{\Msets{\Pl{N}}}, \id{\Tr{N}})$ is a morphism $(N, <_N) \to (N, <_N)$ that respects the identity laws. Morphism composition is defined in the obvious way from composition of multiset homomorphisms and functions, as does associativity. 
\end{proof}
\begin{proposition}\label{prop: ordered and unordered Petri are equivalent}
	$\OPetri$ is equivalent to $\Petri$, the category of Petri nets and their morphisms.
\end{proposition}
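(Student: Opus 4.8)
The plan is to exhibit the evident forgetful functor $U\colon \OPetri \to \Petri$ and show it is an equivalence by checking it is full, faithful, and essentially surjective. On objects, $U$ sends an ordered net $(N, <_N)$ to its underlying net $N$; on morphisms, $U$ is the identity, which makes sense precisely because a morphism of ordered Petri nets $f\colon (N, <_N) \to (M, <_M)$ was defined to be nothing more than a morphism $N \to M$ of the underlying nets. Functoriality of $U$ is then immediate, since identities and composition in $\OPetri$ are by definition computed in $\Petri$.

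Fullness and faithfulness are essentially tautological. For any two ordered nets $(N, <_N)$ and $(M, <_M)$, the action of $U$ on hom-sets is literally the identity function: both $\operatorname{Hom}_{\OPetri}((N,<_N),(M,<_M))$ and $\operatorname{Hom}_{\Petri}(N,M)$ are, by definition, the set of pairs $(\PlM{f},\TrM{f})$ satisfying the net-morphism compatibility conditions. Hence $U$ is fully faithful with no further work.

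The only step with any content is essential surjectivity. Given an arbitrary Petri net $N$, I must produce an ordered net whose image under $U$ is isomorphic to $N$; but since $U(N, <_N) = N$ on the nose, it suffices to equip the set $\Pl{N}$ with \emph{some} well ordering $<_N$. This is possible for every set by the well-ordering theorem, so $(N, <_N)$ is an object of $\OPetri$ mapping to exactly $N$, and $U$ is (strictly) essentially surjective. I would then invoke the standard fact that a fully faithful, essentially surjective functor is half of an equivalence; concretely, one builds a pseudo-inverse $\Petri \to \OPetri$ by fixing once and for all a choice of well ordering on the places of each net, and observes that the two round-trips are naturally isomorphic to the respective identities — on the $\OPetri$ side, the witnessing isomorphism at $(N, <_N)$ is simply $\id{N}$ regarded as a morphism to $(N, <_N')$, which is legal precisely because morphisms of ordered nets are not required to respect the orderings.

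The main (indeed only) subtlety is the reliance on the axiom of choice: essential surjectivity uses the well-ordering theorem, and constructing the explicit pseudo-inverse uses choice again to pick an ordering per net. No genuinely combinatorial obstacle arises; the argument is a formality once the definition of morphism of ordered nets is read carefully.
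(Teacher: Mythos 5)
Your proof is correct and matches the paper's argument in all essentials: both rest on the observations that morphisms of ordered nets ignore the ordering (giving full faithfulness on the nose) and that the well-ordering theorem supplies an ordering for any set of places (giving essential surjectivity), with identity morphisms witnessing the isomorphism between the same net equipped with two different orderings. The only cosmetic difference is the direction of presentation: you exhibit the forgetful functor $\OPetri \to \Petri$, while the paper exhibits its pseudo-inverse $\Petri \to \OPetri$ obtained by choosing an ordering per net.
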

\begin{proof}
	This is obvious by noting that we can send any Petri net to an ordered net by picking any ordering procedure on the set of its places.\footnote{This is guaranteed in the general case by assuming the well-ordering theorem. Such claim is not problematic considering that the well-ordering theorem can be realized constructively if the set of places is finite, which is a necessary assumption in implementations.} Since ordering does not play any role in the definition of ordered nets morphisms, such mapping can be extended to a functor which is identity on morphisms, hence fully faithful. Moreover, if two ordered nets have the same underlying net, as in$(M, <)$ and $(M, <')$ for some Petri net $M$, then they are isomorphic in $\OPetri$ via the morphism $(\id{\Msets{\Pl{M}}}, \id{\Tr{M}})$. This proves that our functor is also essentially surjective, concluding the proof. 
\end{proof}
Unsurprisingly, grounded morphisms behave smoothly with respect to the imposed ordering structure:
\begin{proposition}
	Petri nets and grounded morphisms between them form a subcategory of $\OPetri$, denoted $\GPetri_<$. $\GPetri_<$ is equivalent to $\GPetri$.
\end{proposition}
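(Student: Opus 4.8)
The plan is to imitate the proof of Proposition~\ref{prop: ordered and unordered Petri are equivalent} almost verbatim, carrying the groundedness condition along for the ride.

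First I would establish that $\GPetri_<$ is a subcategory of $\OPetri$. It shares all the objects of $\OPetri$ (ordered Petri nets) and keeps precisely those morphisms $f$ whose place component $\PlM{f}$ is a grounded multiset homomorphism. Closure under the categorical operations holds for the same reason that $\GPetri$ is a subcategory of $\Petri$: identity net morphisms are grounded — the identity on $\Msets{\Pl{N}}$ is $\Mset{\id{\Pl{N}}}$ — and a composite of grounded morphisms is grounded, witnessed by the composite of the underlying functions on places. Associativity and the unit laws are inherited from $\OPetri$.

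For the equivalence, I would reuse the comparison functor from the proof of Proposition~\ref{prop: ordered and unordered Petri are equivalent}: it sends each net $N$ to $(N, <_N)$ for a chosen well ordering $<_N$ on $\Pl{N}$ (which exists by the well-ordering theorem, or constructively when $\Pl{N}$ is finite), and it acts as the identity on morphisms. Restricted to $\GPetri$ this still makes sense, because the functor does not touch the morphism component and hence preserves groundedness, so it lands in $\GPetri_<$. Being the identity on hom-sets, it is automatically full and faithful. For essential surjectivity, given any ordered net $(M, <)$ the functor hits $(M, <_M)$, and the pair $(\id{\Msets{\Pl{M}}}, \id{\Tr{M}})$ — which is grounded, being $(\Mset{\id{\Pl{M}}}, \id{\Tr{M}})$ — is an isomorphism $(M, <_M) \to (M, <)$ in $\GPetri_<$; hence the functor is essentially surjective, and therefore an equivalence.

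I do not anticipate a genuine obstacle here: the whole argument is the restriction of an already-proven equivalence. The only points that deserve an explicit line are that the comparison functor preserves groundedness (immediate, since it is the identity on morphisms) and that the isomorphism witnessing essential surjectivity is itself grounded (immediate, since its components are identities); once those are noted, the statement follows from Proposition~\ref{prop: ordered and unordered Petri are equivalent} with no further work.
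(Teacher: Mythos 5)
Your proposal is correct and follows exactly the route the paper intends: the paper states this proposition without proof, treating it as an immediate restriction of Proposition~\ref{prop: ordered and unordered Petri are equivalent} to grounded morphisms, which is precisely what you carry out. The two points you flag explicitly --- that the comparison functor preserves groundedness because it is the identity on morphisms, and that the isomorphism $(\id{\Msets{\Pl{M}}}, \id{\Tr{M}})$ witnessing essential surjectivity is itself grounded --- are exactly the details the paper leaves implicit, and they check out.
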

\subsection*{Ordered FFSMCs}
Similarly to what we did with Petri nets, we can impose an order structure on FSSMCs. Everything is conceptually similar to what we did in the last section.
\begin{definition}
	An \emph{ordered FSSMC} is a couple $(\CategoryC, <_\CategoryC)$ where $\CategoryC$ is a FSSMC and $<_\CategoryC$ is a well order on its set of object generators. Ordered FSSMCs and transition-preserving functors between them form a category, denoted $\OFSSMC$.
\end{definition}
As for Definition~\ref{def: ordered Petri net}, the ordering on generating objects goes basically undetected by the morphism structure. Hence we can swiftly conclude that:
\begin{fact}
	$\OFSSMC$ is equivalent to $\FSSMC$. Ordered FSSMCs and grounded transition-preserving functors form a subcategory of $\OFSSMC$, denoted $\GFSSMC_<$. $\GFSSMC_<$ is equivalent to $\GFSSMC_<$.
\end{fact}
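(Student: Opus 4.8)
The plan is to transcribe, almost verbatim, the proof of Proposition~\ref{prop: ordered and unordered Petri are equivalent} (and of its grounded companion) with FSSMCs and transition-preserving functors in place of nets and net morphisms. First I would exhibit the forgetful functor $U\colon \OFSSMC \to \FSSMC$ sending $(\CategoryC, <_\CategoryC)$ to $\CategoryC$ and acting as the identity on morphisms. This is well defined because a morphism of ordered FSSMCs was \emph{defined} to be nothing more than a transition-preserving functor between the underlying FSSMCs, so the only thing to check is functoriality, which is immediate.

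Next I would observe that $U$ is fully faithful: for ordered FSSMCs $(\CategoryC, <_\CategoryC)$ and $(\CategoryD, <_\CategoryD)$ the hom-set in $\OFSSMC$ is by definition equal to $\FSSMC(\CategoryC, \CategoryD)$, and $U$ is the identity map on it. Then I would establish essential surjectivity: given any FSSMC $\CategoryC$, its set of object generators admits a well order (constructively when that set is finite, which is the case relevant for implementation, and by the well-ordering theorem otherwise), so $\CategoryC \cong U(\CategoryC, <_\CategoryC)$; moreover any two choices $(\CategoryC, <_\CategoryC)$ and $(\CategoryC, <'_\CategoryC)$ are isomorphic in $\OFSSMC$ via the identity functor $\id{\CategoryC}$, which is transition-preserving since it sends each generator to itself composed with identity symmetries. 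Being fully faithful and essentially surjective, $U$ is an equivalence, which is the first claim.

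For the grounded part, I would first note that Proposition~\ref{prop: transition-preserving subcategory}, together with the fact that a composite of functors grounded on objects is again grounded on objects, shows that grounded transition-preserving functors between ordered FSSMCs contain identities and are closed under composition; hence they form a subcategory $\GFSSMC_<$ of $\OFSSMC$. Restricting $U$ to $\GFSSMC_<$ lands in $\GFSSMC$; it remains fully faithful by the same identification of hom-sets, and it is still essentially surjective because the witnessing isomorphism $\id{\CategoryC}$ is grounded (the identity is a grounded homomorphism on object generators). Therefore $\GFSSMC_<$ is equivalent to $\GFSSMC$.

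I do not expect any genuine obstacle here: the argument is a direct transcription of the ordered-versus-unordered Petri net equivalence already proven in the appendix. The only points requiring a moment's care are that the forgetful functor must be checked to land in the \emph{transition-preserving} (resp.\ grounded transition-preserving) subcategory, and that the identity functor is the right isomorphism witnessing that different orderings on the same category give equivalent objects — both of which are immediate.
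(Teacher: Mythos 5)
Your proposal is correct and matches the paper's intent exactly: the paper states this as a \emph{Fact} without proof, justifying it only by the remark that the ordering is invisible to the morphism structure, and the intended argument is precisely the transcription of Proposition~\ref{prop: ordered and unordered Petri are equivalent} that you carry out. The only (immaterial) difference is that you run the equivalence via the forgetful functor $\OFSSMC \to \FSSMC$ rather than the ``pick an ordering'' functor in the other direction used in the Petri net proof; both are quasi-inverse and rely on the same well-ordering observation.
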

\subsection*{Redefining the mappings}
Since we added extra structure both to nets and FSSMCs, we need to slightly refine the mappings we already defined in Definitions~\ref{def: free functor on objects} and~\ref{def: forgetful functor on objects}:
\begin{definition}
	Given an ordered Petri net $(N, <_N)$, we map it to the ordered, free-on-objects strict symmetric monoidal category $(\OFoldSym N, <_{\OFoldSym N})$, where the order relations $<_N$ and $<_{\OFoldSym N}$ are the same.
\end{definition}
\begin{definition}
	To an ordered FSSMC $(\CategoryC, <_\CategoryC)$ we associate the net $(\UnFoldSym \CategoryC, <_{\UnFoldSym \CategoryC})$, where the order relations $<_\CategoryC$ and $<_{\UnFoldSym \CategoryC}$ are the same.
\end{definition}
Since the ordering doesn't play any role in the definition of morphisms both on the nets side and on the FSSMCs side, all the results proven in the paper can be generalized to the case of ordered nets/FSSMCs with minimal modifications.
\end{document}